\tikzset{cross/.style={cross out, draw=black, minimum size=2*(#1-\pgflinewidth), inner sep=0pt, outer sep=0pt},
cross/.default={1pt}}
\definecolor{ffqqqq}{rgb}{1,0,0}
\definecolor{qqffqq}{rgb}{0,1,0}
\definecolor{ffffff}{rgb}{1,1,1}
\colorlet{ColorGray}{gray!30}
\pgfplotsset{compat=newest}
\newtheorem{thm}{Theorem}
\newtheorem{lem}[thm]{Lemma}
\newtheorem{prop}[thm]{Proposition}
\newtheorem{ques}{Question}
\theoremstyle{definition}
\newtheorem{defn}[thm]{Definition}
\newtheorem{rem}[thm]{Remark}
\newtheorem{obs}[thm]{Observation}
\numberwithin{thm}{section}
\def\cC{\mathcal{C}}
\def\cO{\mathcal{O}}
\def\cP{\mathcal{P}}
\def\cS{\mathcal{S}}
\def\cT{\mathcal{T}}
\def\cU{\mathcal{U}}
\renewcommand{\H}{\mathbb{H}}
\def\N{\mathbb{N}}
\def\Pr{\mathbb{P}}
\def\R{\mathbb{R}}
\def\Z{\mathbb{Z}}
\renewcommand{\le}{\leqslant}
\renewcommand{\ge}{\geqslant}
\renewcommand{\to}{\rightarrow}
\def\<{\langle}
\def\>{\rangle}
\title{Complexity of 2D bootstrap percolation difficulty: Algorithm and NP-hardness}
\date{\today}
\author{Ivailo Hartarsky\thanks{DMA, CNRS, UMR 8553, \'Ecole Normale Sup\'erieure, PSL University, Paris.\newline Present address: CEREMADE, CNRS, UMR 7534, Universit\'e Paris-Dauphine, PSL University, Place du Mar\'echal de Lattre de Tassigny, 75016, Paris, France. The author was supported in part by European Research Council Starting Grant 680275 MALIG.\newline\textsf{hartarsky@ceremade.dauphine.fr}} \and Tam\'as R\'obert Mezei\thanks{Alfréd Rényi Institute of Mathematics, 13--15 Re\'altanoda utca, 1053 Budapest, Hungary. The author was supported in part by the National Research, Development and Innovation Office (NKFIH) grant K-116769 and KH-126853. \newline\textsf{mezei.tamas.robert@renyi.hu}}}
\begin{document}
\maketitle

\begin{abstract}
Bootstrap percolation is a class of cellular automata with random initial state.
Two-dimensional bootstrap percolation models have three rough universality
classes, the most studied being the ``critical'' one. For this class the scaling
of the quantity of greatest interest (the critical probability) was determined by Bollobás, Duminil-Copin, Morris and Smith~\cite{Bollobas14} in terms of a simply defined combinatorial quantity called ``difficulty'', so the subject seemed closed up to finding sharper results. However, the computation of the difficulty was never considered. In this paper we provide the first algorithm to determine this quantity, which is, surprisingly, not as easy as the definition leads to thinking. The proof also provides some explicit upper bounds, which are of use for bootstrap percolation. On the other hand, we also prove the negative result that computing the difficulty of a critical model is NP-hard. This two-dimensional picture contrasts with an upcoming result of Balister, Bollobás, Morris and Smith~\cite{Balister18b} on uncomputability in higher dimensions. The proof of NP-hardness is achieved by a technical reduction to the \textsc{Set Cover} problem.
\end{abstract}
\textbf{MSC2010:} Primary 68Q17; Secondary 03D15, 60C05 \\
\textbf{Keywords:} bootstrap percolation, critical models, difficulty, complexity, NP-hard, decidable.

\section{Introduction}\label{sec:intro}
\subsection{Background}
Bootstrap percolation is a class of cellular automata whose first representative was introduced in 1979 by Chalupa, Leath and Reich~\cite{Chalupa79} in statistical physics. Further applications to several other areas have been considered, namely dynamics of the Ising model, kinetically constrained models for the glass transition, abelian sandpiles and others (see a recent review of Morris~\cite{Morris17} for more information).

We consider the following iterative discrete-time process on the elements (\emph{sites}) of $\Z^d$. At each time $t\in\N$ every site is either \emph{infected} or \emph{healthy}. We encode the state of all sites by specifying the set of infected sites $A_t$. Given a set $A\subseteq\Z^d$ or ${(\Z/n\Z)}^d$ of initially infected sites, more sites become infected at each discrete time step following a deterministic monotone local rule invariant in time and space, while infections never heal. More precisely, let us introduce the broadest framework brought forward by Bollobás, Smith and Uzzell~\cite{Bollobas15}.\footnote{Earlier partly non-rigorous considerations of a more restricted class of models can be found in the works of Gravner and Griffeath~\cites{Gravner96,Gravner99} from the 1990s.}

A bootstrap percolation model is specified by a finite set $\cU$, called the \emph{update family}, of finite subsets of $\Z^d\setminus\{0\}$, called \emph{rules}. For an initial set of \emph{infected} sites $A=A_0\subseteq \Z^d$ we recursively define for all $t\in\N$
\[A_{t+1}=A_t\cup\{x\in \Z^d\;:\;\exists U\in\cU,\;x+U\subseteq A_t\}\]
and $[A]=\bigcup_{t\ge0}A_t$ is the \emph{closure} of $A$ with respect to this operation.

For concreteness, four examples of such models with different update families $\cU$ are given in Figure~\ref{fig:example}. We will use those to also illustrate further definitions. For instance, in the East model (see Figure~\ref{fig:East}) one infects sites whose bottom or left neighbour is infected, while in the North-East model (Figure~\ref{fig:NE}) one only infects sites such that both their bottom and left neighbours are infected.
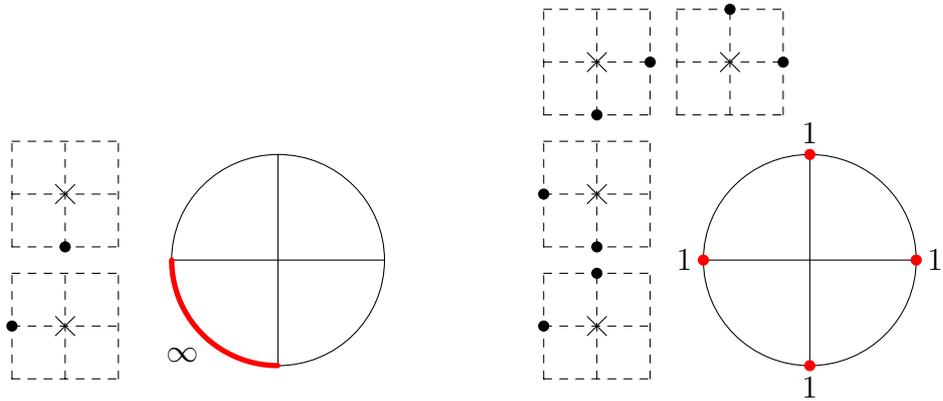
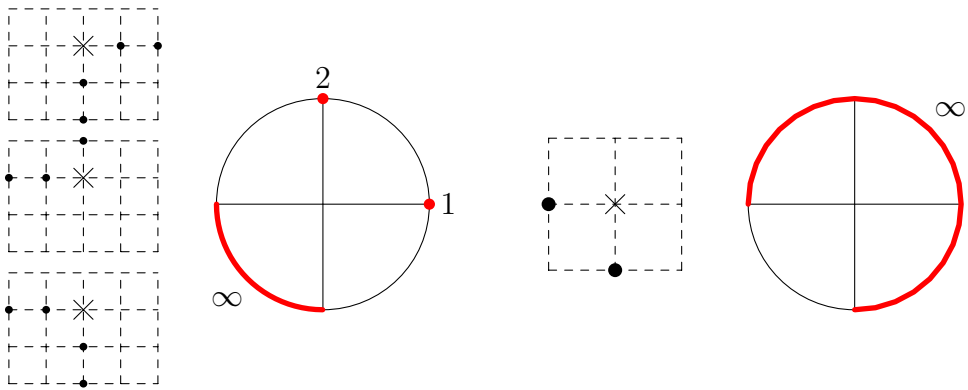
\begin{figure}
	\centering
	\begin{subfigure}{0.45\textwidth}
		\centering
		\begin{tikzpicture}[line cap=round,line join=round,x=1.0cm,y=1.0cm, scale=0.7]
			\clip (-1.5,-1.5) rectangle (6.5,6.5);
			\begin{scope}[shift={(0,0)}]
				\draw [dash pattern=on 3pt off 3pt, xstep=1cm,ystep=1cm] (-1,-1) grid (1,1);
				\fill (-1,0) circle (3pt);
				\draw (0,0) node[cross=4pt,rotate=0] {};
			\end{scope}
			\begin{scope}[shift={(0,2.5)}]
				\draw [dash pattern=on 3pt off 3pt, xstep=1cm,ystep=1cm] (-1,-1) grid (1,1);
				\fill (0,-1) circle (3pt);
				\draw (0,0) node[cross=4pt,rotate=0] {};
			\end{scope}
			\begin{scope}[shift={(4,1.25)},x=2.0cm,y=2.0cm]
				\draw(0,0) circle (2cm);
				\draw (0,0)-- (1,0);
				\draw (0,1)-- (0,0);
				\draw (0,0)-- (-1,0);
				\draw (0,0)-- (0,-1);
				\draw [shift={(0,0)},line width=2pt,color=ffqqqq]
					plot[domain=pi:4.71,variable=\t]
					({1*1*cos(\t r)+0*1*sin(\t r)},{0*1*cos(\t r)+1*1*sin(\t r)});
				\draw (-0.9,-0.9) node {$\infty$};
			\end{scope}
		\end{tikzpicture}
		\caption{\label{fig:East}The East model, which is supercritical (with difficulty $0$).}
	\end{subfigure}\qquad%
	\begin{subfigure}{0.45\textwidth}
		\centering
		\begin{tikzpicture}[line cap=round,line join=round,x=1.0cm,y=1.0cm, scale=0.7]
			\clip (-1.5,-1.5) rectangle (6.5,6.5);
			\begin{scope}[shift={(0,0)}]
				\draw [dash pattern=on 3pt off 3pt, xstep=1cm,ystep=1cm] (-1,-1) grid (1,1);
				\fill (-1,0) circle (3pt);
				\fill (0,1) circle (3pt);
				\draw (0,0) node[cross=4pt,rotate=0] {};
			\end{scope}

			\begin{scope}[shift={(0,2.5)}]
				\draw [dash pattern=on 3pt off 3pt, xstep=1cm,ystep=1cm] (-1,-1) grid (1,1);
				\fill (0,-1) circle (3pt);
				\fill (-1,0) circle (3pt);
				\draw (0,0) node[cross=4pt,rotate=0] {};
			\end{scope}

			\begin{scope}[shift={(0,5)}]
				\draw [dash pattern=on 3pt off 3pt, xstep=1cm,ystep=1cm] (-1,-1) grid (1,1);
				\fill (1,0) circle (3pt);
				\fill (0,-1) circle (3pt);
				\draw (0,0) node[cross=4pt,rotate=0] {};
			\end{scope}

			\begin{scope}[shift={(2.5,5)}]
				\draw [dash pattern=on 3pt off 3pt, xstep=1cm,ystep=1cm] (-1,-1) grid (1,1);
				\fill (0,1) circle (3pt);
				\fill (1,0) circle (3pt);
				\draw (0,0) node[cross=4pt,rotate=0] {};
			\end{scope}

			\begin{scope}[shift={(4,1.25)},x=2.0cm,y=2.0cm]
				\draw(0,0) circle (2cm);
				\draw (0,0)-- (1,0);
				\draw (0,1)-- (0,0);
				\draw (0,0)-- (-1,0);
				\draw (0,0)-- (0,-1);
				\fill [color=ffqqqq] (1,0) circle (3pt) node[anchor=west,black] {$1$};
				\fill [color=ffqqqq] (0,1) circle (3pt) node[anchor=south,black] {$1$};
				\fill [color=ffqqqq] (-1,0) circle (3pt) node[anchor=east,black] {$1$};
				\fill [color=ffqqqq] (0,-1) circle (3pt) node[anchor=north,black] {$1$};
			\end{scope}
		\end{tikzpicture}
		\caption{\label{fig:2neighbour}The modified $2$-neighbour model, which is
		critical with difficulty $1$.}
	\end{subfigure}
	\\
	\begin{subfigure}{0.45\textwidth}
		\centering
		\begin{tikzpicture}[line cap=round,line join=round,x=1.0cm,y=1.0cm, scale=0.7]
				\clip (-1.75,-1.75) rectangle (7.0,6.5);
				\begin{scope}[shift={(0,0)},scale=0.7]
				\draw [dash pattern=on 3pt off 3pt, xstep=1cm,ystep=1cm] (-2,-2) grid (2,1);
				\fill (-1,0) circle (3pt);
				\fill (-2,0) circle (3pt);
				\fill (0,-1) circle (3pt);
				\fill (0,-2) circle (3pt);
				\draw (0,0) node[cross=4pt,rotate=0] {};
			\end{scope}
			\begin{scope}[shift={(0,2.5)},scale=0.7]
				\draw [dash pattern=on 3pt off 3pt, xstep=1cm,ystep=1cm] (-2,-2) grid (2,1);
				\fill (-1,0) circle (3pt);
				\fill (-2,0) circle (3pt);
				\fill (0,1) circle (3pt);
				\draw (0,0) node[cross=4pt,rotate=0] {};
			\end{scope}
			\begin{scope}[shift={(0,5)},scale=0.7]
				\draw [dash pattern=on 3pt off 3pt, xstep=1cm,ystep=1cm] (-2,-2) grid (2,1);
				\fill (1,0) circle (3pt);
				\fill (2,0) circle (3pt);
				\fill (0,-1) circle (3pt);
				\fill (0,-2) circle (3pt);
				\draw (0,0) node[cross=4pt,rotate=0] {};
			\end{scope}
			\begin{scope}[shift={(4.5,2)},x=2.0cm,y=2.0cm]
			\draw(0,0) circle (2cm);
			\draw (0,0)-- (1,0);
			\draw (0,1)-- (0,0);
			\draw (0,0)-- (-1,0);
			\draw (0,0)-- (0,-1);
			\fill [color=ffqqqq] (1,0) circle (3pt) node[anchor=west,black] {$1$};
			\fill [color=ffqqqq] (0,1) circle (3pt) node[anchor=south,black] {$2$};
			\draw [shift={(0,0)},line width=2pt,color=ffqqqq] plot[domain=pi:4.71,variable=\t]({1*1*cos(\t r)+0*1*sin(\t r)},{0*1*cos(\t r)+1*1*sin(\t r)});
			\draw (-0.9,-0.9) node {$\infty$};
		\end{scope}
	\end{tikzpicture}
	\caption{\label{fig:toy}A toy model, which is critical with difficulty $1$.}
\end{subfigure}\qquad%
	\begin{subfigure}{0.45\textwidth}
		\centering
		\begin{tikzpicture}[line cap=round,line join=round,x=1.0cm,y=1.0cm,scale=0.7]
			\clip (-1.75,-1.75) rectangle (7.0,6.5);
			\begin{scope}[shift={(0,2)},scale=1.25]
				\draw [dash pattern=on 3pt off 3pt, xstep=1cm,ystep=1cm] (-1,-1) grid (1,1);
				\fill (-1,0) circle (3pt);
				\fill (0,-1) circle (3pt);
				\draw (0,0) node[cross=4pt,rotate=0] {};
			\end{scope}
			\begin{scope}[shift={(4.5,2)},x=2.0cm,y=2.0cm]
				\draw(0,0) circle (2cm);
				\draw (0,0)-- (1,0);
				\draw (0,1)-- (0,0);
				\draw (0,0)-- (-1,0);
				\draw (0,0)-- (0,-1);
				\draw [shift={(0,0)},line width=2pt,color=ffqqqq] plot[domain=-pi/2:pi,variable=\t]({1*1*cos(\t r)+0*1*sin(\t r)},{0*1*cos(\t r)+1*1*sin(\t r)});
				\draw (0.9,0.9) node {$\infty$};
			\end{scope}
		\end{tikzpicture}
		\caption{The North-East model, which is subcritical (with difficulty $\infty$).}\label{fig:NE}
	\end{subfigure}
	\caption{Four example bootstrap percolation models. For each one the rules are depicted on the left with $0$ marked by a cross, the sites of each rule denoted by dots and the grid lines dashed. The figure on the right gives the stable directions in red with their difficulties next to them. The isolated stable directions are marked by red dots.}\label{fig:example}
\end{figure}

We will only discuss the most studied case, where $A$ is chosen at random according to the product Bernoulli measure $\Pr_p$, so that each site is initially infected with probability $p\in[0,1]$. Equipped with this measure, the model exhibits a phase transition at
\[p_c=\inf\{p\in[0,1]\;:\;\Pr_p(0\in[A])=1\}.\]
The model is defined identically on tori ${(\Z/n\Z)}^d$ by setting
\[p_c(n)=\inf\{p\in[0,1]\;:\;\Pr_p([A]={(\Z/n\Z)}^d)\ge 1/2\}.\]
In this background section we consider $n\to 0$ and use associated asymptotic notation. Namely, given a function $f(n)$ we write $\cO(f(n))$ for a function bounded in absolute value by $Cf(n)$ for some constant $C$ not depending on $n$ (but possibly depending on $\cU$). We write $\Theta(f(n))$ for a function that is bounded above by $Cf(n)$ and below by $cf(n)$ for some positive constants $c$ and $C$, neither depending on $n$. We will use analogous notation in later sections with respect to other diverging parameters.

Although for some concrete models higher dimensions have been understood and some general universality conjectures have been put forward in~\cite{Balister16}*{Conjecture~16} and~\cite{Bollobas14}*{Conjecture~9.2}, we will restrict our attention to the 2-dimensional case. The results of Bollobás, Smith and Uzzell~\cite{Bollobas15} and Balister, Bollobás, Przykucki and Smith~\cite{Balister16} combined establish that all bootstrap percolation models can be partitioned (by a simple procedure) into $3$ ``rough universality classes'' with qualitatively different scaling of $p_c(n)$. In order to define these we need some notation. For a direction $u$ in the unit circle $S^1=\{x\in\R^2\;:\;\|x\|_2=1\}$, which we standardly identify with $\R/2\pi\Z$, we denote by
\[\H_u=\{x\in\Z^2\;:\;\<x,u\><0\}\]
the open half-plane with normal $u$ and by
\[l_u=\{x\in\Z^2,\;\<x,u\>=0\}\]
the line passing through $0$ perpendicular to $u$. A direction $u$ is \emph{unstable} if there exists $U\in\cU$ such that $U\subset\H_u$ and \emph{stable} otherwise. It is not difficult to show that the unstable directions form a finite union of open intervals in $S^1$ with \emph{rational} endpoints, that is a direction $u$ such that $l_u\cap\Z^2\neq \varnothing$. Indeed, each rule individually induces a (possibly empty) interval of unstable directions with endpoints perpendicular to sites in the rule (so in $\Z^2$), there are finitely many rules and, by definition, the union of these intervals is the set of unstable directions for the full model. Thus, the set of stable directions is a finite union of closed intervals with rational endpoints in $S^1$, some of which may be reduced to a single point called \emph{isolated stable direction}.

As an example, let us consider the modified $2$-neighbour model (Figure~\ref{fig:2neighbour}). The top-left rule consisting of $(1,0)\in\Z^2$ and $(0,-1)\in\Z^2$ makes all directions in the open interval $(\pi/2,\pi)\subset S^1$ unstable. By invariance by rotation by $\pi/2$ there remain only the four isolated stable directions shown in Figure~\ref{fig:2neighbour}. The reader is encouraged to check the stable directions of the other examples in Figure~\ref{fig:example}.

We are now ready to define the partition into rough universality classes conjectured in~\cite{Bollobas15} and proved in~\cites{Bollobas15,Balister16} is in terms of these directions.
\begin{itemize}
\item $\cU$ is \emph{supercritical} if there exists an open semi-circle of unstable directions, in which case $p_c(n)=n^{-\Theta(1)}$.
\item $\cU$ is \emph{critical} if it is not supercritical and there exists a semi-circle with a finite number of stable directions, in which case $p_c(n)={(\log n)}^{-\Theta(1)}$.
\item $\cU$ is \emph{subcritical} otherwise (if each semi-circle contains infinitely many stable directions), in which case $p_c>0$.
\end{itemize}
Let us check that the modified $2$-neighbour model (Figure~\ref{fig:2neighbour})
is critical. As observed before, the only stable directions are the four axis
directions. In particular, every open semi-circle contains either one or two of
them. For the toy model (Figure~\ref{fig:toy}) again every open semi-circle
contains at least one of the stable directions, but e.g.\ the semi-circle $(-\pi/2,\pi/2)\subset S^1$ only contains one stable direction, so it is also critical. For the East model the same semi-circle contains no stable directions, making it supercritical. Finally, in the North-East model there is only a single quarter of a circle of unstable directions. In particular, every half-circle contains infinitely many unstable directions, so the model is subcritical.

The behavior of supercritical models is dominated by the study of finite infected sets with infinite closure (a single infected site in the East model), while subcritical ones are more closely related to percolation (for example, the North-East model is equivalent to classical oriented site percolation if one considers healthy sites). The most studied models are critical ones, to which the archetypal example of bootstrap percolation belongs --- the $2$-neighbor model, in which a site becomes infected if at least two of its nearest neighbors are already infected. Note that the modified $2$-neighbour model in Figure~\ref{fig:2neighbour} does not infect a site if it only has $2$ infected neighbours which are on opposite sides of it, however, from the point of view of stable directions and difficulties to be defined later, this modification is of no importance. The $2$-neighbour model is the first one for which the rough universality result above (and more) was established --- by Aizenman and Lebowitz~\cite{Aizenman88}. They realized that the dynamics is dominated by a bottleneck --- creating an infected ``droplet'' of a certain ``critical'' size, which can then easily grow out to infinity, and proved that for this model $p_c(n)=\Theta(1/\log n)$. In a substantial breakthrough Holroyd~\cite{Holroyd03} determined the asymptotic location of the sharp threshold and since then much sharper results have been proved~\cites{Gravner08,Hartarsky19Rob}:
\[p_c=\frac{\pi^2}{18\log n}-\frac{\Theta(1)}{{(\log n)}^{3/2}}.\]

Such sharp or sharper bounds have been obtained for a handful of other specific models~\cites{Duminil18b,Bollobas17,Duminil12}, but still remain open in general. However, the level of precision of the Aizenman-Lebowitz result was established in full generality for critical models by Bollobás, Duminil-Copin, Morris and Smith~\cite{Bollobas14}. They introduce the following key notion of \emph{difficulty}.
\begin{defn}[Definition~1.2 of~\cite{Bollobas14}]\label{def:diff}
Let $\cU$ be a critical model and $u$ be a direction. If $u$ is an isolated
stable direction, we define its \emph{difficulty}, $\alpha(u)$, to be the
minimum cardinality of a set $Z\subseteq \Z^2\setminus\H_u$ such that $\bar Z:=[\H_u\cup Z]\setminus\H_u$ is infinite. For unstable directions $u$ we set $\alpha(u)=0$ and for non-isolated stable ones we set $\alpha(u)=\infty$. The \emph{difficulty} of $\cU$ is
\begin{equation}
\label{eq:alpha:def}
\alpha=\inf_{C\in\cC}\sup_{u\in C}\alpha(u),
\end{equation}
where $\cC$ is the set of open semi-circles of $S^1$.
\end{defn}
Let us note that the definition we give is formally different from the one in~\cite{Bollobas14}, but it turns out to be equivalent. Indeed, any unstable direction $u$ satisfies $[\H_u]=\Z^2$, since one can infect $0$ by definition of unstable directions and, by translation invariance one can infect $l_u$, so that a translate of $\H_u$ becomes infected and one may conclude by induction. Here we used that for any rational direction, we can write $\Z^2=\bigsqcup_{i\in\Z} (l_u+i\cdot x_u)$ for some vector $x_u\in\Z^2$, where we write $A+x$ for $\{a+x,a\in A\}$ for any set $A\subseteq\Z^2$ and site $x\in\Z^2$. For stable directions the equivalence is proved in Lemma~2.7 of~\cite{Bollobas14}.

For the reader's convenience, let us determine the difficulties of the stable directions of the toy model of Figure~\ref{fig:toy}. By definition unstable directions have difficulty $0$ and non-isolated stable ones have difficulty $\infty$, so we are left with the right ($0$) and top ($\pi/2$) isolated stable directions. Let us start with the direction $0$. Since it is stable $[\H_0]=\H_0$, we have $\alpha\ge 1$.\footnote{More generally, for any model and any isolated stable direction $u$ we have $1\le \alpha(u)<\infty$ (see Lemma~2.7 of~\cite{Bollobas14}).} However, $[\H_0\cup \{(0,0)\}]=\H_0\cup l_0$, since one can infect $(0,-1)$ by the second rule (see Figure~\ref{fig:toy}) and, inductively $(0,-k)$ for all $k\in\N$; one can also use the first rule to infect $(0,1)$ and then $(0,k)$ for all $k\in\N$ once $(0,0)$ and $(0,-1)$ are infected. No further infections occur, as $u$ is stable and $\H_u\cup l_u$ is a translate of $\H_u$. Thus, $\alpha(0)=1$, as $l_{0}=\overline{\{(0,0)\}}$ is infinite ($l_u$ is infinite for any rational direction $u$). Turning to $u=\pi/2$, we have $\alpha(u)\ge 1$ as before, and one can check as above that $[\H_u\cup \{(0,0),(1,0)\}]=\H_u\cup l_u$ using the first and third rules. It remains to see that there does not exists $x\in\Z^2$ such that $\overline{\{x\}}$ is infinite, in order to conclude that $\alpha(u)=2$. Indeed, all rules contain at least $2$ sites in $\Z^2\setminus\H_u$, so for any $x$ we have $[\{x\}\cup\H_u]=\{x\}\cup\H_u$. Finally, once we know that $\alpha(\pi/2)=2$ and $\alpha(0)=1$, we have that the open half-circle $(-\pi/2,\pi/2)$ only contains one stable direction and it has difficulty $1$, so $\alpha\le 1$, which is the smallest possible value for a critical model: by definition, every half-circle contains a stable direction and, as we noted, only unstable directions have difficulty $0$.

The result of~\cite{Bollobas14} states that\footnote{They actually give matching bounds up to a constant factor, which requires dividing critical models into two subclasses with different logarithmic factors.}
\[p_c(n)=\frac{{(\log\log n)}^{\cO(1)}}{{(\log n)}^{1/\alpha}}.\]

\subsection{Results}

So far it has not been investigated how one could determine the difficulty $\alpha$ in practice, mainly owing to the simple definition and to the fact that for simple models such as the ones in Figure~\ref{fig:example} this is straightforward. In this paper we consider $\alpha$ from a computational perspective.

Throughout the paper, we assume that $\cU$ is described as a family of sets of pairs of integer coordinates represented in binary. Therefore the size of the input is proportional to
\begin{equation}\label{eq:def:inputsize}
	\|\cU\|:=\log D\cdot\sum_{U\in\cU}|U|,
\end{equation}
where $D$ is the ``diameter'' of $\cU$:
\begin{equation}\label{eq:def:D}
D=2\cdot\max\left\{\|x\|_\infty\;:\;x\in \bigcup_{U\in\cU}U\right\}.
\end{equation}
A further justification of the need to take $D$ into account in $\|\cU\|$ is provided in the Appendix showing that the difficulty $\alpha$ is not bounded in terms of $\sum_{U\in\cU}|U|$ only. Our first result is that $\alpha$ is computable. We prove this by giving an explicit algorithm and bounding its complexity.
\begin{thm}\label{th:decidable}
There exists an algorithm which, given a critical bootstrap percolation update family $\cU$, computes its difficulty $\alpha$.\footnote{This result is proved independently by Balister, Bollobás, Morris and Smith~\cite{Balister18b}.}
\end{thm}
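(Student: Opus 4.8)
The plan is to reduce the computation of $\alpha$ to finitely many computations of individual difficulties $\alpha(u)$ for a computable finite list of candidate directions, and then to show that each $\alpha(u)$ is itself computable with an explicit bound. The first step is to identify the stable directions: as explained in the excerpt, the unstable directions form a finite union of open intervals with endpoints perpendicular to sites appearing in the rules, so the stable directions are a finite union of closed intervals, and the endpoints of all these intervals lie in $\{u : l_u \cap \Z^2 \neq \varnothing,\ |u| \text{ realised by some } x \in \bigcup_{U \in \cU} U\}$. In particular both the isolated stable directions and the endpoints of the non-isolated stable intervals are computable from $\cU$ in time polynomial in $\|\cU\|$. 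Since in formula~\eqref{eq:alpha:def} the supremum $\sup_{u \in C}\alpha(u)$ over an open semi-circle $C$ only depends on which stable directions lie in $C$ (with non-isolated ones contributing $\infty$), and since $\cU$ is critical there is some semi-circle containing only finitely many --- in fact only isolated --- stable directions, the infimum over $\cC$ is actually a minimum over the finitely many ``combinatorial types'' of semi-circles determined by how $C$'s endpoints interleave with the finite set of special directions. So it suffices to compute $\alpha(u)$ for each isolated stable direction $u$.

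The heart of the matter is therefore: given an isolated stable direction $u$, compute $\alpha(u) = \min\{|Z| : Z \subseteq \Z^2 \setminus \H_u,\ [\H_u \cup Z] \setminus \H_u \text{ infinite}\}$. By Lemma~2.7 of~\cite{Bollobas14} we know $1 \le \alpha(u) < \infty$, so the minimum is attained; the difficulty --- and this is where the ``not as easy as the definition leads to thinking'' remark bites --- is that a priori the witnessing set $Z$, and the region of $\Z^2 \setminus \H_u$ that one needs to examine to certify that $\bar Z$ is infinite, could be very spread out. The plan is to prove an a priori bound: there is a computable function $f(\|\cU\|)$ (I would expect something like a polynomial in $D$ to a power depending on $|\cU|$, or at worst a tower of bounded height) such that if $\alpha(u) = k$ then there is a witness $Z$ contained in a box of side $f(\|\cU\|)$ adjacent to $l_u$, and moreover the infiniteness of $\bar Z$ is certified by $\bar Z$ already containing a full translate of a half-line of $l_u$ within that box. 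The mechanism: once $\H_u \cup Z$ infects some bounded ``quasi-stable'' configuration along $l_u$ that is periodic with period dividing the lattice period of $l_u$, one can translate it along $l_u$ to infect the whole line, hence a new half-plane, and pumping shows that if no such periodic pattern appears within a bounded window then $\bar Z$ stays finite. Granting such a bound, the algorithm enumerates all $Z$ inside the box in increasing order of size, runs the (finite, since stabilisation inside a bounded region around $l_u$ terminates) closure dynamics restricted to a slightly larger box, checks whether a translatable line-segment pattern has appeared, and outputs the smallest $k$ for which this succeeds.

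The main obstacle is establishing the a priori bound on the size and location of an optimal witness $Z$ and on the window needed to certify infiniteness; everything else (computing stable directions, reducing the semi-circle infimum to a finite minimum, running bounded closure dynamics) is routine. I would attack the bound by a compactness-plus-pumping argument along $l_u$: consider the infinite set $\bar Z$; since $u$ is stable, $\bar Z$ lives in a slab $\{x : 0 \le \langle x, u\rangle \le R\}$ for some $R$ bounded in terms of $D$ is \emph{not} automatic --- in fact it is exactly the subtlety, because growth could proceed to larger and larger distances from $l_u$. So the real content is to show that growth to infinity, if it happens at all, can be rerouted to happen ``along $l_u$'' within a bounded slab, using that $u$ is isolated (so nearby directions are unstable and help propagate) and that the rules have bounded diameter $D$. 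This is precisely the step I expect to require the careful combinatorial work that the introduction is flagging, and it is also where the ``explicit upper bounds of use for bootstrap percolation'' mentioned in the abstract should come out.
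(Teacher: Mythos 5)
Your high-level plan (enumerate stable directions, reduce the semi-circle infimum to a finite minimum over isolated stable directions, then compute each $\alpha(u)$ by brute force once a priori bounds are in place) is exactly the structure of the paper's proof, and your proposed ``pumping along $l_u$'' corresponds closely to the paper's Lemma~\ref{lem:1d:bound}: once $\H_u$ is infected, the dynamics on $l_u$ is a one-dimensional bootstrap process, and a pigeonhole/pumping argument on windows of size $D$ shows that $\bar A$ either escapes to infinity or stays within distance $D^3 2^D$ of $A$. So that half of your sketch is on the right track.

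The genuine gap is the bound on how far a \emph{minimal} witness $Z$ can sit from $l_u$ in the $u$-direction, and your ``compactness-plus-pumping'' does not supply it. Note first a small confusion: since $u$ is stable, $\bar Z$ is automatically confined to the slab $\{0\le\langle x,u\rangle\le \max_{z\in Z}\langle z,u\rangle\}$ (every rule contains a site with nonnegative $u$-projection), so growth does \emph{not} ``proceed to larger and larger distances from $l_u$''; what is not automatic is a bound on $\max_{z\in Z}\langle z,u\rangle$ for a minimal $Z$. Pumping along $l_u$ gives no information about this transverse coordinate, and the isolation of $u$ does not help in the way you suggest (propagation through nearby unstable directions). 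What the paper actually uses here is Lemma~\ref{lem:covering:algo}: starting from the fact that $\cU$ is not supercritical, one finds a set $\cT$ of stable directions containing $u$ and $0$ in its convex envelope, builds $\cT$-droplets of diameter $\cO(D^3)$, and runs the covering algorithm of Bollob\'as--Smith--Uzzell to conclude that for any $Z$ of size at most $D$, $[Z]$ is contained in droplets of total diameter $\cO(D^4)$, each touching $Z$. Combined with a minimality argument (the $u$-projections of a minimal $Z$ cannot have gaps larger than $\cO(D^4)$, else one could split $Z$), this yields $\langle z,u\rangle=\cO(D^4)$ for all $z$ in a minimal $Z$. This covering-algorithm input is the missing ingredient in your sketch; without it, you have no way to rule out a minimal witness anchored arbitrarily far from $l_u$. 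A further small omission: your algorithm needs an a priori upper bound on $k=\alpha(u)$ to terminate when the answer is found at the correct level; the paper gets $\alpha(u)\le D$ from the existence (Lemma~2.8 of~\cite{Bollobas14}) of a rule contained in $\H_u$ together with a half-line of $l_u$.
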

\begin{rem}\label{rem:complex}
In fact, it is not hard to check that our algorithm runs in time at most
\[|\cU|^2\cdot 2^{D^2(1+o(1))}=\exp(\cO(D^2)),\]
which is in the worst case at most doubly exponential in $\|\cU\|$. This bound is as sharp as a bound in terms of $D$ only can be. Indeed, $|\cU|=e^{\cO(D^2)}$ and $|\cU|$ can be as large as $2^{D^2}$.
\end{rem}

Explicit bounds analogous to the ones derived in the proof of Theorem~\ref{th:decidable} are the only missing ingredient causing the constants appearing in the main results of~\cites{Bollobas14,Hartarsky19I} to be implicit (cf~\cite{Bollobas14}*{Lemma~6.5} and its version in~\cite{Hartarsky19I}).

Moreover, a corresponding uncomputability result in higher dimensions based on supercritical models in two dimensions has been announced by Balister, Bollobás, Morris and Smith~\cite{Balister18b} prior to our work. As that could lead one to expect, Theorem~\ref{th:decidable} is not at all automatic.

On a high level, the main idea behind our proof is that if a half-plane $\H_u$ is infected, the process restricted to the line $l_u$ is a 1-dimensional bootstrap percolation process. Owing to the bounded range of the rules and translation invariance, the final state of this process is either periodic with bounded period or finite, which two possibilities can be distinguished in a correspondingly bounded time.

On the other hand, we also prove the following negative result.
\begin{thm}\label{th:NP}
The problem of computing the difficulty $\alpha$ of a critical bootstrap percolation update family $\cU$ is NP-hard.
\end{thm}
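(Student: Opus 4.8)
The plan is to reduce from an NP-hard variant of \textsc{Set Cover}. Recall that in \textsc{Set Cover} one is given a ground set $[m]$ together with subsets $S_1,\dots,S_k\subseteq[m]$ and a target $t$, and must decide whether $t$ of the sets suffice to cover $[m]$. The key observation, already visible in Definition~\ref{def:diff}, is that computing $\alpha$ requires understanding $\alpha(u)$ for the isolated stable directions $u$, and that $\alpha(u)$ is itself a covering-type quantity: a minimal finite $Z\subseteq\Z^2\setminus\H_u$ with $\bar Z$ infinite must supply, for the 1-dimensional process on $l_u$ described at the end of the excerpt, enough infected sites to trigger an infinite (periodic) cascade. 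The strategy is therefore to design, from a \textsc{Set Cover} instance, an update family $\cU$ with a distinguished isolated stable direction $u_0$ (say $u_0=\pi/2$, so $l_{u_0}$ is the horizontal axis) such that the cheapest $Z$ on $l_{u_0}$ that propagates corresponds exactly to a cheapest set cover, while all other stable directions of $\cU$ are arranged to have difficulty strictly larger, and the semi-circle structure forces $\alpha=\alpha(u_0)$.

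First I would set up the ``gadget'' geometry. Working in the half-plane below the horizontal axis being fully infected, I want rules of $\cU$ that live just above/on the axis and encode the incidence structure: think of $m$ ``element columns'' at positions $x=1,\dots,m$ on $l_{u_0}$, and for each set $S_j$ a rule (or a small bundle of rules, together with translates obtained from the periodicity $\Z^2=\bigsqcup_i(l_{u_0}+i\cdot x_{u_0})$) whose effect, once $\H_{u_0}$ is infected, is: ``if all of $\{x : x\in S_j\}$ in the current row are infected, infect the entire next row''. Placing a chosen set $Z$ consisting of one site per column of some sub-collection $\{S_j : j\in J\}$ then makes $\bar Z$ infinite iff $\bigcup_{j\in J}S_j=[m]$, and the cardinality of $Z$ is exactly $\sum_{j\in J}|S_j|$ (or, with a cleaner encoding using a single ``activation'' site per set, exactly $|J|$). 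The binary representation of coordinates is essential here: I will need columns at exponentially separated or carefully spaced positions so that distinct rules do not accidentally interfere, but since $\|\cU\|$ in \eqref{eq:def:inputsize} counts $\log D$, using positions up to $\mathrm{poly}(m,k)$ (hence $D=\mathrm{poly}(m,k)$) keeps the reduction polynomial.

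The second block of work is controlling everything \emph{except} $u_0$. I need: (i) $\cU$ is critical — ensure there is an open semi-circle with only finitely many stable directions (e.g. make most directions unstable by throwing in a few ``generic'' rules, keeping $u_0$ and perhaps $-u_0$ stable and isolated); (ii) every open semi-circle contains a stable direction (so $\cU$ is not supercritical); (iii) all isolated stable directions $u\neq u_0$ satisfy $\alpha(u)> t_{\max}$, where $t_{\max}$ is an a priori upper bound on the optimum cover size (trivially $t_{\max}\le k$), so that they never determine the infimum in \eqref{eq:alpha:def}; and (iv) there is an open semi-circle $C_0$ containing $u_0$ as its only stable direction, forcing $\sup_{u\in C_0}\alpha(u)=\alpha(u_0)$ and hence $\alpha\le\alpha(u_0)$, while every semi-circle missing $u_0$ has some direction of difficulty $>t_{\max}\ge\alpha(u_0)$, forcing $\alpha=\alpha(u_0)$. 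Points (iii) and (iv) are the delicate ones: I expect to realise the high-difficulty auxiliary stable directions by using rules with many sites (so propagating along those lines requires many seeds), a standard trick, but one must verify the arithmetic of stable-direction computation for the assembled $\cU$ and check no unintended low-difficulty direction appears.

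The main obstacle, and the part requiring genuine care, is (iii)--(iv) together with proving the \emph{exactness} of the correspondence $\alpha(u_0)=\text{(optimal cover size)}$ — in particular the \emph{lower} bound, i.e. that \emph{no} clever choice of $Z$ cheaper than any set cover can make $\bar Z$ infinite. Because $Z$ may place sites anywhere in $\Z^2\setminus\H_{u_0}$, not merely on $l_{u_0}$, and the closure can in principle exploit rows far above the axis, one must show a ``no shortcut'' lemma: any infinite $\bar Z$ forces, via the bounded-range 1-dimensional reduction, a row whose infected pattern is covered by the chosen rules, and tracing this back constrains $Z$ to project onto a cover. I anticipate this needs a monotonicity/renormalisation argument showing the cheapest infinite configuration can be taken ``one row at a time'', after which the combinatorics collapses to \textsc{Set Cover}. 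Assembling the gadget so that this lemma holds cleanly — no leakage between set-gadgets, no parasitic infinite cascades — is where the bulk of the technical reduction (to \textsc{Set Cover}, as announced in the abstract) will lie.
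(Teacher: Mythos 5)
Your high-level plan is the right one and matches the paper's: reduce from \textsc{Set Cover}, build an update family whose only relevant direction is $u_0=\pi/2$, encode chosen sets as ``activation'' sites, and show that $\alpha(u_0)$ equals (up to an additive constant) the minimum cover size. You also correctly identify where the real difficulty sits: the lower bound, i.e.\ that no cheaper $Z\subseteq\Z^2\setminus\H_{u_0}$ (possibly exploiting rows far above the axis, or exploiting translated/overlapping partial copies of the gadget) can propagate. That is precisely what the bulk of Section~\ref{sec:NP} establishes, and your proposal stops at saying ``I anticipate this needs a monotonicity/renormalisation argument,'' without supplying it. So there is a genuine gap: the argument that a cheapest infinite configuration can be taken ``one row at a time'' and that it then \emph{must} project onto a cover is the theorem, not a routine step. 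In the paper this is carried by two specific devices. First, a shared base pattern $W$ of size $|\cS|^2+|\cS|$ that every rule $U_{i,j}^k$ requires, with the key near-rigidity property (Observation~\ref{obs:W}) that $|(q+W)\setminus W|>|\cS|$ for all $q\neq 0$; this forces any sub-cover-budget $Z$ to contain essentially one translate $p+W$, pins down $p$, and kills shortcuts via two overlapping partial gadgets. Second, the very long ``tail'' $T$ and the size-$N|\cS|^2$ rules $U_0,U_1$ make using them directly prohibitively expensive and force all early infections into a single row (Lemma~\ref{prop:ycoord}), after which Lemma~\ref{lem:q} rules out infecting a second full copy of $W$ before a long run appears. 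None of these rigidity mechanisms are present in your sketch, and without something like them the ``no leakage / no parasitic cascade'' claim you flag as the main obstacle remains open.

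A secondary, smaller gap concerns your items (ii)--(iv). You propose to engineer several auxiliary isolated stable directions $u\neq u_0$ and then argue they all have difficulty exceeding some $t_{\max}$. This is needlessly delicate and it is unclear how to certify such lower bounds for the auxiliary directions without importing another hard argument. The paper sidesteps the issue entirely: the rules $U_0,U_1$ (two large quarter-plane-shaped rules) make all of $(0,\pi/2)\cup(\pi/2,\pi)$ unstable, so $\pi/2$ is the unique isolated stable direction and the rest of the stable set is a closed arc $[-\pi,0]$ of non-isolated (hence difficulty-$\infty$) directions. Then $\alpha=\alpha(\pi/2)$ is automatic from \eqref{eq:alpha:def}. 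Adopting a gadget with a single isolated stable direction removes your whole item-(iii) obligation and is the cleaner route.
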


This result is proved by a fairly technical reduction to the \textsc{Set Cover} decision problem in Section~\ref{sec:NP}. Besides the result of~\cite{Balister18b}, another reason to expect that the problem of determining $\alpha$ is hard in a sense made clear in Theorem~\ref{th:NP} is a recent parallel notion of difficulties adapted to subcritical models termed ``critical densities''. Those were introduced by the first author~\cite{Hartarsky18subcritical} and they are clearly far too complicated for one to expect to be able to compute them. From this point of view the result of Theorem~\ref{th:NP} is not unexpected.

\section{Decidability: proof of Theorem~\ref{th:decidable}}\label{sec:decidable}
In this section we provide an algorithm to compute the difficulty of a critical model. Let us stress that it is not optimized and is only meant to prove Theorem~\ref{th:decidable}.

\begin{proof}[Proof of Theorem~\ref{th:decidable}]
Fix an update family $\cU$. To start, let us see how to determine the set of stable directions in time polynomial in the size of the input $\|\cU\|$. Indeed, for each site $x$ in each rule $U$ we determines its polar coordinates $(r_x,\theta_x)=(\|x\|_2,x/\|x\|_2)\in\R_+\times S^1$. On the practical side, $r_x$ can be represented as the square root of an integer bounded by $D^2$ and $\theta_x$ can be encoded by its tangent, which is rational with numerator and denominator bounded by $D$, and one boolean indicating whether $\theta_x\in(-\pi/2,\pi/2)$. Then for each rule $U$ we take an arbitrary $x_0\in U$ and compute $\theta_x-\theta_{x_0}$ for all $x\in U$ (its tangent is still rational and its numerator and denominator are bounded by $D^2$). We determine the largest and smallest such values, $\delta_+,\delta_-$, considering differences in $\left(-\pi,\pi\right]$. Finally, the unstable interval of $U$ is $(\theta_{x_0}+\delta_++\pi/2,\theta_{x_0}-\delta_-+3\pi/2)\subset S^1$ (which is empty if $\delta_+-\delta_-\ge\pi$). The set of unstable directions is then the union of these intervals for all $U\in\cU$. In particular, the isolated stable directions and, more generally, the endpoints of the intervals of stable directions for $\cU$ are among the endpoints of the intervals for different $U$, so there are at most $2|\cU|$ of them. In order to determine this union in practice it suffices to check for each of these endpoints whether it is stable (not contained in any of the unstable intervals for other $U\in\cU$) and keep the information whether it was a left or right endpoint of the associated interval. Hence, the preliminary step of determining the (isolated) stable directions is completed in polynomial time in $\|\cU\|$. It is also not hard to verify for each of the $|\cU|$ right-endpoints whether there exists a stable direction in the half-circle starting there and whether there are finitely many of them (i.e.\ all are isolated), which allows one to decide if $\cU$ is supercritical, critical or subcritical in polynomial time.

Assuming that $\cU$ is determined to be critical, we can use~\eqref{eq:alpha:def} to compute the difficulty, $\alpha$, once we know all $\alpha(u)\in\N$ for isolated stable directions. Indeed, for each of the open semi-circles with one endpoint among those considered above, we only need to calculate the maximum of $\alpha(u)$ for isolated stable directions $u$ (if there are any non-isolated directions, we do not need to consider the semi-circle). As this can also be done in time polynomial in $\|\cU\|$, we will now fix an isolated stable direction $u$ and provide an algorithm for determining $\alpha(u)$.

We shall assume that $D$ is sufficiently large throughout the proof. Indeed, given $D$, all $U\in\cU$ are distinct subsets of ${\{-D/2,\dots,D/2\}}^2$, so there are at most $2^{2^{{(D+1)}^2}}$ possible $\cU$ and $|\cU|\le 2^{{(D+1)}^2}$. Therefore, the algorithm's asymptotic complexity is only determined by families with large values of $D$, as one can directly list the difficulties for isolated stable directions with ``small'' values of $D$ in constant time.

Recall the notation $\bar Z$ from Definition~\ref{def:diff}, which we shall use without specifying $u$, as it will be clear from the context. In order to determine $\alpha(u)$ we will use the following lemmas to bound the size of the set $Z$ in Definition~\ref{def:diff}. The first of these is a one-dimensional result which we shall reduce the problem to.
\begin{lem}\label{lem:1d:bound}
Let $\cU$ be an update family, let $u\in S^1$ be an isolated stable direction and let $A$ be a finite subset of $l_u$. Then the set $\bar A$ is either infinite or its maximal distance from $A$ is at most $D^3\cdot 2^D$.
\end{lem}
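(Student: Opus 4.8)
The plan is to reduce the two-dimensional closure dynamics on the line $l_u$ to a one-dimensional cellular automaton. Since $u$ is a fixed isolated stable direction, after infecting $\H_u$ entirely, no new site of $\H_u$ gets (re)infected, and the only question is which sites of $l_u$ become infected. Pick a vector $e\in\Z^2$ spanning $l_u\cap\Z^2$ (its norm is $\cO(D)$), so we identify $l_u\cap\Z^2$ with $\Z$ via $k\mapsto k e$. For each rule $U\in\cU$, its intersection pattern with $l_u$ together with the requirement $U\cap\H_u^c\subseteq\H_u\cup l_u$ translates into a one-dimensional update rule: a site $k\in\Z$ becomes infected if some translate of a fixed finite subset $R_U\subseteq\Z$ lies in the current infected subset of $l_u$. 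The range of each such one-dimensional rule is bounded by $\cO(D)$ in the $\Z$-coordinate, because $U\subseteq\{-D/2,\dots,D/2\}^2$ and the projection of this box onto $l_u$ has length $\cO(D^2/\|e\|)=\cO(D^2)$; a cleaner bound of $D$ can be extracted from the fact that consecutive lattice points on $l_u$ are at distance $\|e\|\ge 1$ and $U$ has $\ell_\infty$-diameter at most $D$, so at most $D+1$ lattice translates on $l_u$ are relevant, giving range at most $D$. The initial configuration is the finite set $A\subseteq l_u\cong\Z$, and $\bar A$ is exactly the closure under these one-dimensional rules.

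**The pigeonhole/periodicity argument.** Now I would analyze the one-dimensional process to the \emph{right} of $A$ (the left is symmetric). Let $r\le D$ be the maximal range of the one-dimensional rules, so the state of site $k$ at any time is determined by the states of sites in a window of width $r$ to its left. Consider the final (stable) configuration $\bar A$ and look at successive windows $W_j=\{a_{\max}+1+jr,\dots,a_{\max}+(j+1)r\}$ of $r$ consecutive sites, $j=0,1,2,\dots$, where $a_{\max}=\max A$. Each window has one of at most $2^r\le 2^D$ possible infection patterns in $\bar A$. The key claim is that the restriction of $\bar A$ to $\{a_{\max}+1,a_{\max}+2,\dots\}$ is eventually determined by the pattern on the preceding width-$r$ window together with the fact that no further-left sites will ever help (since to the right of $A$ everything that will be infected is infected by rules reaching leftward into the already-stable region). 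Hence if two windows $W_j$ and $W_{j'}$ with $j<j'$ have the same pattern in $\bar A$, then $\bar A$ is periodic from $W_j$ onward with period $(j'-j)r$; and if that common pattern is nonempty, $\bar A$ is infinite. Therefore, if $\bar A$ is finite, among the first $2^D+1$ windows two must coincide (both empty, forcing all subsequent windows empty), so $\bar A\cap\{k>a_{\max}\}$ is contained in the first $(2^D+1)r\le D\cdot(2^D+1)$ sites. Multiplying by $\|e\|=\cO(D)$ to return to $\Z^2$-distance, and absorbing constants, gives a bound comfortably below $D^3\cdot 2^D$ for $D$ large.

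**The main obstacle.** The delicate point is justifying that the final configuration $\bar A$ restricted to a half-line is genuinely ``determined by a bounded window'' in the strong sense needed for the periodicity conclusion — i.e.\ that one can legitimately run a left-to-right sweep. The subtlety is that the one-dimensional dynamics is not a left-to-right scan a priori: a site far to the right could in principle trigger an infection to its left that then propagates back rightward. The resolution is that we are only looking to the right of $A$: there, every infected site of $\bar A$ must have been infected by a rule, all of whose sites lie in $\bar A\cup\H_u$; the sites of such a rule lying on $l_u$ are within the range window. One then argues by induction on the position that the infected status of site $a_{\max}+m$ in $\bar A$ is a function of $\bar A\cap\{a_{\max}+m-r,\dots,a_{\max}+m-1\}$ only — this requires checking that a site cannot be forced infected \emph{only} with the help of sites strictly to its right in $\bar A$, which follows because if $\bar A$ is finite then the rightmost infected site $a^*$ to the right of $A$ would have no rule entirely inside $\bar A\cup\H_u$ forcing it (its rules extending rightward would need infected sites beyond $a^*$), unless it was infected by a leftward-reaching rule — and this is exactly the window determinacy we want, obtained by taking $a^*$ minimal among counterexamples. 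Once window determinacy is in place, the pigeonhole step is routine, and the factor $D^3\cdot 2^D$ in the statement is a generous bound that swallows all the $\cO(D)$ conversion constants.
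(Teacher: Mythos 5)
Your reduction to a one-dimensional process on $l_u$ is the same as the paper's, and your pigeonhole idea for the tail to the right of $\max A$ is in the same spirit as the paper's. But there is a genuine gap: you only bound $\bar A$ on the two half-lines $\{k>\max A\}$ and $\{k<\min A\}$. The lemma asks for a bound on the distance of every point of $\bar A$ to $A$, and when $A$ has widely separated elements $a_i<a_{i+1}$, a point of $\bar A$ lying deep in the gap between them is not covered by your argument. Your ``window determinacy'' (that $\bar A\cap[b+D,\infty)$ equals $[\bar A\cap[b,b+D-1]]\cap[b+D,\infty)$) uses crucially that $A\cap[b+D,\infty)=\varnothing$; in the interior of $A$'s convex hull this fails, because an $a_{i+1}$ to the right is an extra seed that $Q_b$ does not see, so the periodicity step breaks down. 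The paper handles this by working with a \emph{minimal} counterexample $A$: if some $b\in[A]$ sits far from both neighbours $a_i,a_{i+1}$, one splits $A=A'\cup A''$ at the gap, applies the (by minimality) already-established bound to each half, and concludes $[A]=[A']\cup[A'']$, contradicting the existence of $b$. You need an ingredient of this kind.

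A secondary point: your justification of window determinacy via ``the rightmost site $a^*$ has a leftward-only rule'' is not sufficient as stated. The rightmost site of $\bar A$ indeed must be infected by a rule pointing left, but the second-rightmost site may be infected by a rule that uses $a^*$ on its right, so ``no site needs help from its right'' is literally false. The correct route (which the paper uses implicitly in the phrase ``the dynamics to the right of $b+D$ is not affected by infections to the left of $b$'') is an induction on infection time: the first site in $[b+D,\infty)$ infected in the original process already has all the sites of its rule in $Q_b\cup\bigl([A]\cap[b+D,\infty)\bigr)$, and so on. Note also that the paper does not actually need full periodicity; it only needs the weaker self-replication statement that $[Q_b]$ contains $Q_b+(c-b)$, whence $[Q_b]$ (and so $[A]$) is infinite — this sidesteps some of the care you take over determinacy while reaching the same conclusion.
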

\begin{proof}
Observe that by stability of $u$ we have $\bar A\subset l_u$. Then the dynamics started from $\H_u\cup A$ can be viewed as a dynamics on $l_u$ only. Note that $l_u$ consists of integer sites on a line, so it is naturally identified with $\Z$ by the composition of a homothety and a rotation. Furthermore, we know that $u$ is an isolated stable direction and, thereby, $l_{u+\pi/2}$ (which is simply a rotation of $l_u$) contains a site $x$ in some $U\in\cU$ with $\|x\|_\infty\le D/2$ by~\eqref{eq:def:D}. Hence, the homothety ratio is between $1/D$ and $1$.

Notice that the dynamics restricted to $l_u$ is simply a $1$-dimensional bootstrap percolation process, where each rule $U\in\cU$ is replaced by $U\cap l_u$ if $U\subset (\H_u\cup l_u)$ and removed otherwise. It therefore suffices to prove the following claim, which concludes the proof.
\end{proof}

\paragraph{Claim.}
For a one-dimensional bootstrap percolation family and a finite set $A\subset \Z$, we have that $\bar A$ is either infinite or its maximal distance from $A$ is at most ${D^2}\cdot{2^D}$.

\begin{proof}
Denote $A=\{a_1,\ldots,a_n\}$ with $a_1<\cdots<a_n$. Let us denote by $P$ the property that the following three conditions hold:
\begin{itemize}
	\item $|[A]|<\infty$, $d(s,A)\le D\cdot2^{D+1}$ for all $s\in[A]$,
	\item $\max[A]-a_n\le D\cdot2^{D+1}-D$,
	\item $a_1-\min[A]\le D\cdot2^{D+1}-D$.
\end{itemize}
Let $A$ be minimal with respect to inclusion violating $P$. We next prove that $|[A]|=\infty$.

\medskip

\paragraph{Base.}
Assume that $|A|=1$, without loss of generality $A=\{0\}$. If $[A]=A$, we have nothing to prove, as $P$ clearly holds. Otherwise, assume that $x\in\Z$ becomes infected on the first step. Then, since $\{0\}$ is the only infected site initially, $\{x\}$ is a rule in the update family. However, that entails that $k.x$ becomes infected on the $k$-th iteration at the latest and, in particular, $[A]$ is infinite.

\paragraph{Step.}
Assume that $|A|>1$. Assume for a contradiction that there exists $0<i<n$ and $b\in[A]$ such that $a_{i+1}>b>a_i$ and $\min(b-a_i,a_{i+1}-b)>D\cdot2^{D+1}$. Then, by minimality of $A$, both $A'=\{a_1,\ldots,a_i\}$ and $A''=A\setminus A'$ satisfy $P$. Therefore,
\[\min[A'']-\max[A']>D\cdot2^{D+2}-2(D\cdot2^{D+1}-D)>D,\]
so that $[A]=[A']\cup [A'']$, which contradicts the existence of $b\in[A]$. Indeed, there is no site in $\Z$ such that a rule translated by it intersects both $[A']$ and $[A'']$ and by definition of the closure those do not evolve under the dynamics.

Assume next that $\max([A])>a_n+D\cdot2^{D+1}-D$ (the corresponding case for $\min([A])$ is treated identically). Then, by the pigeon-hole principle, there exist $b,c\in\Z$ with $a_n+D<b<c-D<\max([A])-2D$ such that
\[\varnothing\neq [A]\cap[b,b+D-1]=([A]\cap[c,c+D-1])-(c-b)\]
(since no infection can cross a region of size $D$ not intersecting $[A]$ to reach $\max([A])$). Therefore, $[A]\cap[b,b+D-1]$ infects a translate of itself, since the dynamics to the right of $b+D$ is not affected by infections to the left of $b$, once we fix the state of $b,\ldots, b+D-1$. Similarly to the case $|A|=1$, this is a contradiction with $|[A]|<\infty$, which concludes the proof.
\end{proof}
The next lemma is an application of the covering algorithm of~\cite{Bollobas15}. For the sake of completeness, we will include a sketch of it in the proof.
\begin{lem}\label{lem:covering:algo}
Let $\cU$ be a critical update family and $u$ be an isolated stable direction. Let $Z\subset \H_{u+\pi}$ be a set of size at most $D$. Then for every $z\in[Z]$ we have $\<z,u\>\ge -\cO(D^4)$.
\end{lem}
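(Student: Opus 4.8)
The plan is to run the covering algorithm of Bollobás, Smith and Uzzell~\cite{Bollobas15} starting from $Z$, keeping track of all constants in terms of $D$. First I would fix the relevant set of directions. Since $\cU$ is critical it is in particular not supercritical, so no open semi-circle consists of unstable directions; equivalently, the stable directions meet every open semi-circle. Hence, by compactness of $S^1$, there is a finite set $S$ of stable directions with $u\in S$ such that every open semi-circle contains an element of $S$, i.e.\ every arc of $S^1\setminus S$ has length less than $\pi$. One can take for $S$ the rational directions perpendicular to differences of two sites appearing in $\cU$ (there are $\cO(D^2)$ of these, all with denominators $\cO(D)$) together with a bounded number of extra rational directions to cut up any long stable interval; thus $|S|=\cO(D^2)$ and the angle between consecutive elements of $S$ is $\Omega(D^{-2})$. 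For $c=(c_v)_{v\in S}$ call $\cD(c)=\bigcap_{v\in S}\{x\in\Z^2:\<x,v\>\le c_v\}$ an \emph{$S$-droplet}; since $S$ meets every open semi-circle these are bounded, and singletons are $S$-droplets. The only fact we need about stability is that \emph{every $S$-droplet $\cD$ satisfies $[\cD]=\cD$}: if $x\in[\cD]\setminus\cD$ arose from a rule $U$ with $x+U\subseteq\cD$, picking $v\in S$ with $\<x,v\>>c_v$ gives $U\subseteq\cD-x\subseteq\{y:\<y,v\>\le c_v-\<x,v\><0\}=\H_v$, contradicting stability of $v$.

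Next I would run the algorithm. Maintain a collection of $S$-droplets, initialised to the singletons $\{z\}$, $z\in Z$, so there are at most $|Z|\le D$ of them; set $R'=D$ and $R=3D$. At each step of the bootstrap dynamics from $Z$, for every newly infected site $x$ within distance $R'$ of a current droplet, replace that droplet by the smallest $S$-droplet containing it and $x$, and whenever two droplets lie within distance $R$, replace them by the smallest $S$-droplet containing their union. A routine induction (as in~\cite{Bollobas15}) shows the infected set at every time is covered by the current droplets, using that a newly infected site lies within distance $D\le R'$ of a droplet because the rule creating it is infected, hence covered. The number of droplets never increases and strictly decreases at each merge, so at most $D-1$ merges occur during the whole run.

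The crux is to bound the diameter of the final droplets, and the key point is that \emph{a droplet grows only when a merge occurs}. Suppose a newly infected $x\notin\cD$ within distance $R'$ of a droplet $\cD$ would enlarge it, and let $U$ create $x$. If $x+U\subseteq\cD$ then $x\in[\cD]=\cD$, a contradiction; so some infected $y\in(x+U)\setminus\cD$ lies in a different droplet $\cD'$, and $d(\cD,\cD')\le d(x,\cD)+\|y-x\|_2\le R'+D<R$, so $\cD$ and $\cD'$ are merged at this step. Hence the total enlargement of any droplet is due to the at most $D-1$ merges it undergoes. The genuinely technical step, and the one place $D^4$ is produced, is the quantitative merging estimate of~\cite{Bollobas15} made explicit in $D$: merging two $S$-droplets within distance $R$ (together with the bounded collar of newly infected sites triggering the merge) increases the perimeter by a polynomial-in-$D$ additive amount, which one bounds by $\cO(D^3)$ using $|S|=\cO(D^2)$ and the minimal angle $\Omega(D^{-2})$ between consecutive directions of $S$. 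Since the initial droplets have perimeter $0$, every final droplet has perimeter, hence diameter, $\cO(D^4)$. I expect checking this explicit merging bound — in particular that the smallest $S$-droplet containing two nearby $S$-droplets has perimeter only additively larger, with the $\cU$-dependence controlled solely through $|S|$ and the minimal angle — to be the main obstacle.

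Finally, $[Z]$ is contained in the union of the final droplets, and each final droplet descends through enlargements and merges from, and therefore still contains, at least one point $z\in Z$. As $Z\subset\H_{u+\pi}=\{x:\<x,u\>>0\}$ we have $\<z,u\>>0$, so for every $w$ in that droplet $\<w,u\>\ge\<z,u\>-\|w-z\|_2\ge-\cO(D^4)$. Since $u\in S$, this applies to every $w\in[Z]$, which proves the lemma.
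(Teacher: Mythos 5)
Your proposal follows the same overall strategy as the paper's proof: run the covering algorithm of Bollobás, Smith and Uzzell from $Z$, tracking droplets whose sides are perpendicular to a fixed finite set of stable directions with angular gaps $\Omega(D^{-2})$, bound the number of merges by $|Z|-1\le D-1$, and bound each merge's cost by $\cO(D^3)$. The differences in the details are real but inessential to the final estimate: the paper chooses a set $\cT$ of only three or four stable directions (containing the origin in their convex hull, consecutive gaps bounded away from $\pi$ by $2/D^2$) and initialises with translates of a single large droplet $P\supseteq\bigcup_U U$ of diameter $\cO(D^3)$, after which the merging cost follows directly from Lemma~4.6 of~\cite{Bollobas15} (diameter of the join of two intersecting droplets is at most the sum of the diameters), while you use a larger set $S$ of $\cO(D^2)$ stable directions, initialise from singletons, and supply an inline argument that a droplet is enlarged only at a merge. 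These are equivalent ways of carrying out the BSU covering argument and both lead to $\cO(D^4)$.

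However, there is a genuine gap exactly where you flag it. The ``quantitative merging estimate'' — that joining two $S$-droplets within distance $R=\cO(D)$ increases the relevant size measure by only $\cO(D^3)$ — is not routine bookkeeping: it is precisely the content that the paper outsources to Lemma~4.6 of~\cite{Bollobas15}. Your write-up says you ``expect'' this to be the main obstacle, which is an accurate self-assessment, but it means the technical heart of the lemma is asserted rather than proved. Moreover the bound is delicate: for a family $S$ of directions, the smallest $S$-droplet containing two nearby droplets can be dramatically larger than either when consecutive directions of $S$ are nearly antipodal, and the factor one pays is governed by how close the angular gaps come to $\pi$; the $\Omega(D^{-2})$ lower bound you record is the right quantity to control, but you still need to convert it into the additive $\cO(D^3)$ increase, which is what Lemma~4.6 (applied with a measuring droplet of diameter $\cO(D^3)$ as in the paper, or by a direct geometric computation) delivers. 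A secondary, easily-fixed slip: the endpoints of the stable intervals are perpendicular to individual \emph{sites} of rules, not to \emph{differences} of two sites (see the discussion of stable directions in the introduction), so your description of the candidate set $S$ should be adjusted accordingly; the $\cO(D^2)$ count and $\Omega(D^{-2})$ spacing are unaffected.
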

\begin{proof}
First, we prove the following claim.

\paragraph{Claim.}
There exists a set $\cT\supset\{u\}$ of three or four stable directions containing the origin in their convex envelope (if viewed as a subset of $\R^2$) such that for each $v\in\cT$ there exists $x\in\Z^2\cap v\R$ such that $\|x\|_\infty\le D/2$ and such that for every $v,w\in\cT$ we have $|v-w+\pi|\ge 2/D^2$.
\begin{proof}
First assume that $u+\pi$ is unstable. Let $\cT$ consist of $u$ and the stable directions, $u+\pi+\delta_+$ and $u+\pi-\delta_-$ ($\delta_\pm\in\left(0,\pi\right]$), closest to $u+\pi$ in both semi-circles with endpoint $u+\pi$ (these exist as the set of stable directions is closed). Furthermore, recalling that $\cU$ is not supercritical, there is no semi-circle of unstable directions, so $\delta_++\delta_-<\pi$. This implies that indeed $\cT$ contains $0$ in its convex envelope.

Assume that, on the contrary, $u+\pi$ is stable. Consider the semi-circle $(u,u+\pi)\subset S^1$. In it there exists a stable direction (since $\cU$ is not supercritical). If there are no unstable directions, we pick $u_-=u+\pi/2$, otherwise, we set $u_-$ to be an isolated or semi-isolated stable direction in that semi-circle. We define $u_+$ similarly in the opposite semi-circle. We set $\cT=\{u,u+\pi,u_-,u_+\}$. It is clear that $0$ is in the convex envelope of $\cT$.

In both cases $\cT$ consists of directions which are either isolated, semi-isolated or a rotation by $\pi/2$ of such a direction. Therefore, as in the proof of Lemma~\ref{lem:1d:bound}, there exists a site $x$ as in the statement of the claim.

Finally, let us bound the difference between two directions $v\neq w$ such that there exist $x\in \Z^2\cap v\R$ and $y\in\Z^2\cap w\R$ with $\max(\|x\|_\infty,\|y\|_\infty)\le D/2$. Indeed, $\det(x,y)\in\Z\setminus\{0\}$, so \[|\sin(v-w)|=\frac{|\det(x,y)|}{\|x\|_2\|y\|_2}\ge\frac{2}{D^2}\]
and therefore $|v-w|\ge 2/D^2$.
\end{proof}

We fix a set $\cT$ as in the claim. We call a $\cT$-droplet a polygon with sides perpendicular to the directions in $\cT$. Since $\cT$ contains $0$ in its convex envelope there exist $\cT$-droplets. Since the difference between consecutive directions in $\cT$ are at most $\pi-2/D^2$, we can find a $\cT$-droplet $P$ with diameter $\cO(D^3)$ containing ${[-D/2,D/2]}^2\supseteq\bigcup_{U\in\cU}U$ (e.g.\ a $\cT$-droplet circumscribed around a circle with $D$).

We can then directly apply the covering algorithm of~\cite{Bollobas15} to conclude the proof. Let us outline that algorithm in our setting. We start with a set of translates of $P$, namely $\{z+P,z\in Z\}$. At each step if two of the current droplets $P_1,P_2$ satisfy that there exists $x\in\Z^2$ such that $(P+x)\cap P_1\neq\varnothing$ and $(P+x)\cap P_2\neq\varnothing$, then we replace them by the smallest $\cT$-droplet containing their union. We repeat this as long as possible.

By Lemma~4.6 of~\cite{Bollobas15} (stating that the diameter of the smallest droplet containing two intersecting ones is at most the sum of their respective diameters) the sum of diameters of droplets increases by at most $\mathrm{diam}(P)=\cO(D^3)$. Therefore, in the final set of droplets the total diameter is $\cO(D^4)$, as the number of droplets decreases by $1$ at each step. Moreover, by Lemma~4.5 of~\cite{Bollobas15} the union of the final droplets contains $[Z]$, so the proof is complete, as each of the output droplets contains at least one site of $Z\subset\H_{-u}$.
\end{proof}

\paragraph{Algorithm.} Let us first describe an algorithm to determine $\alpha(u)$ and postpone its analysis. For each integer $k$ from $1$ to $D$ we successively perform the following operations to determine if there exists a set $Z$ of size $k$ as in Definition~\ref{def:diff}. We stop as soon as such a set is found and return the corresponding (minimal) value of $k$. For each fixed $k$ we start by choosing a set $Z_0$. The first site is $0$ and each new one $z$ is picked within distance $D^{11}\cdot 2^D$ from some of the previous ones and such that
\begin{equation}
\label{eq:z'}0\le\<z'-z,u\>=\cO(D^4)
\end{equation}
for some $z'$ among the previous ones. There are at most
\[\binom{D^{\cO(1)}\cdot 2^D}{D}=2^{D^2+o(D^2)}=\exp(\cO(D^2))\]
such choices. For each of them we successively inspect different translations $t\in\Z^2$, such that $0\le \<t,u\>=\cO(D^5)$ and \begin{equation}
\label{eq:txy}0\le \<t,(-y,x)\>< x^2+y^2,
\end{equation}
where $(-y,x)\in\Z^2$ is such that $(x,y)\in u\R$ and $x$ and $y$ are co-prime, in the (total) order given by $\<t,u\>$ starting from $t=0$. Finally, fix $Z=Z_0+t$.

For each $Z$ we run the bootstrap dynamics with initial set of infections $Z\cup\H_u$ until it either stops infecting new sites or infects a site $s$ with $\|s\|_\infty\ge D^{13}\cdot 2^D$ and $\<s,u\>=\cO(D^5)$. This can be done by checking at each step each site at distance $D^{13}\cdot2^D+D$ from the origin for each rule and repeating this for $5^D$ time steps. If the dynamics becomes stationary, we continue to the next choice of $Z$, while otherwise we return $|Z|$ for the value of $\alpha(u)$.

\paragraph{Correctness.} We now turn to proving that the algorithm does return an output and it is precisely $\alpha(u)$. The first assertion is easy. Indeed, as $u$ is an isolated stable direction, (by~\cite{Bollobas14}*{Lemma~2.8}) there exists a rule $U\in\cU$ with
\[U\subset \H_u\cup \{x\in l_u,\<x,u+\pi/2\>>0\},\]
so that adding $D$ consecutive sites on $l_u$ to $\H_u$ is enough to infect a half-line of $l_u$, only taking $U$ into account. Thus, we know that $\alpha(u)\le D$ and the algorithm will eventually check such a configuration when $k=D$, unless it has returned a smaller value, and infections will propagate to distance $D^{13}\cdot2^D$ (and in fact to infinity). Let us then prove that the output is $\alpha(u)$.

Denote by $t_j$ the values of $t$ considered by the algorithm, so that $t_0=0$. Note that by~\eqref{eq:txy} there exists a single $t\in\Z^2$ with a given value of $\<t,u\>$, so that this scalar product indeed defines a total order on the values of $t$ and we can also extend our notation to $j<0$ for convenience, though those are not examined by the algorithm. Further define $l_j:=\{s\in\Z^2,\<s,u\>=\<t_j,u\>\}$ and $Z_j=Z_0+t_j$ for some $Z_0$ considered by the algorithm, so that $l_0=l_u$ by abuse of notation.\footnote{Here we view $0$ as an element of $\Z$, possible value of $j$, while $u$ is an element of $S^1$. As we will not make reference to $l_v$ with $v=0\in S^1$, we hope that this will not lead to confusion.}

\paragraph{Claim.}
Assume that a set $Z_i$ considered by the algorithm is of size $k\le\alpha(u)$ and such that $\bar Z_j$ (recall Definition~\ref{def:diff}) is finite for all $0\le j\le i$. Then the maximal distance between a site from $\bar Z_i$ and $Z_i$ is at most $D^5\cdot2^D\max(0,\<t_i,u\>)$.

\begin{proof}
We prove the statement by induction on $i\in\Z$. For $i<0$, i.e. $\<t_i,u\><0$, then $Z_i\subset \H_u$ by~\eqref{eq:z'} and there is nothing to prove, since $\bar Z_i=\varnothing$ -- no additional infections take place. Assume the property to hold for all $t_j$ with $j\le i$. We aim prove the same for $i+1$.

Observe that for each $0< j\le i+1$ we have that
\begin{equation}\label{eq:barZiplus1}
	\bar Z_{i+1}\cap l_j\subseteq (\bar Z_{i+1-j}\cap l_0)+t_{i+1}-t_{i+1-j}.
\end{equation}
Indeed, $Z_{i+1}\cup\H_u\subseteq (Z_{i+1-j}\cup \H_u)+t_{i+1}-t_{i+1-j}$, since $Z_{i+1}=Z_{i+1-j}+t_{i+1}-t_{i+1-j}$ and $\H_u+t_{i+1}-t_{i+1-j}\supset \H_u$. Furthermore, by stability of $u$ we have that $\bar Z_{i+1}\cap l_{j}=\varnothing$ for $j>i+1$. Also, by~\eqref{eq:barZiplus1} and the induction hypothesis we have that $\bar Z_{i+1}\setminus l_0$ is at distance at most $D^5\cdot2^D\<t_i,u\>$ from $Z_{i+1}$, so we are left with proving that sites in $\bar Z_{i+1}\cap l_0$ are at distance at most $D^5\cdot2^D\<t_{i+1},u\>$ from $Z_{i+1}$.

Consider the set
\[Z'=\{z\in \bar Z_{i+1}\cap l_0, d(z,Z_{i+1})\le D+D^5\cdot 2^D\<t_i,u\>\}.\]
By the reasoning above we have that $\bar Z_{i+1}\cap l_0=Z'\cup \bar Z'$. However, by Lemma~\ref{lem:1d:bound},  $\bar Z'$ cannot be at distance more than $2^D\cdot D^3$ from $Z'$, as $\Z_{i+1}\setminus l_0$ is at distance at least $D$ from all sites in $\bar Z_{i+1}\setminus Z'$. Recalling the definition of $Z'$, we get that $\bar Z_{i+1}$ is at distance at most $D+D^3\cdot 2^D+D^5\cdot 2^D\<t_i,u\>$ and we are done. Indeed, $\<t_{i+1}-t_i,u\>\ge 1/D$, since there exists a site $x\in\Z^2\cap u\R$ with $\|x\|_\infty\le D/2$ and $\<t_{i+1}-t_i,x\>>0$ is an integer.
\end{proof}

The claim clearly implies that the algorithm cannot return a value smaller than $\alpha(u)$. In order to conclude, we need to show that when $k=\alpha(u)$ among the sets examined by the algorithm there will be a set $Z$ such that there exists $z\in\bar Z$ with $\|z\|_\infty\ge D^{13}\cdot 2^D$ and therefore the output will be $\alpha(u)$.

Consider a set $Z\subset \Z^2\setminus \H_u$ as in Definition~\ref{def:diff} of size $\alpha(u)$ (and therefore minimal). Recall that by Lemma~\ref{lem:covering:algo} every $z\in Z$ satisfies $\<z,u\>=\cO(D^4)$ (otherwise $\bar Z=[Z]$ is finite, as $\cU$ is not supercritical) and, by stability of $u$, the same holds for $\bar Z$. Let $\cP=\{x\in\R,\exists z\in Z, \<z,u\>=x\}$ and define $\bar\cP$ similarly for $\bar Z$. These are discrete subsets of $\R$. Note that by minimality of $Z$ and Lemma~\ref{lem:covering:algo}, $\cP\subset \R$ cannot have a gap of length larger than $\cO(D^4)$. Indeed, there exists $x\in \bar\cP$ such that infinitely many points of $\bar Z$ project to it and those are all in $\bar Z'$ where $Z'$ are the sites in $Z$ that project to $x'\in \cP$ such that there exist $n$ and $x_0=x,x_1,\dots, x_n=x'$ in $\cP$ with $|x_{j+1}-x_j|= \cO(D^4)$ and if $Z'\neq Z$, we obtain a contradiction with the minimality of $Z$.

Analogously, let $\cP^\perp=\{x\in\R,\exists z\in Z,\<z,(u+\pi/2)\>=x\}$ and define $\bar \cP^\perp$ similarly for $\bar Z$. We claim that its $\cP^\perp$ cannot have a gap of length larger than $\cO(D^{10}\cdot2^D)$. This time $\bar\cP^\perp$ is necessarily infinite, as only a finite number of points $z\in\Z^2$ with $\<z,u\>=\cO(D^4)$ have the same $(u+\pi/2)$-projection. Considering a set $Z'\subset Z$ inducing the corresponding distance $\cO(D^{10}\cdot2^D)$-connected component of $\cP^\perp$ and using the claim instead of Lemma~\ref{lem:covering:algo} as in the previous paragraph, we reach a contradiction with the minimality of $Z$.

Hence, all $Z$ of size $\alpha(u)$ as in Definition~\ref{def:diff} are in fact considered by the algorithm. Since such a $Z$ with infinite $\bar Z$ exists, the algorithm does indeed output $\alpha(u)$.
\end{proof}

\section{NP-hardness: proof of Theorem~\ref{th:NP}}\label{sec:NP}
In this section we prove Theorem~\ref{th:NP} by providing a reduction from \textsc{Set Cover} to \textsc{2D Critical Bootstrap Difficulty}. For the \textsc{Set Cover} problem we consider a \emph{universe} $\{1,\ldots, N\}$ and a collection $\cS$ of subsets of the universe and assume that $|\cS|\ge 4$ and $N\ge 4$. The \textsc{Set Cover} problem asks for determining the minimum cardinality of a subset of $\cS$ which covers the universe. It is one of the first NP-complete problems described by Karp~\cite{Karp72}.

We fix an instance
\[\cS=\left\{S_i\;:\;i\in\Z, 1\le i\le |\cS|\right\}.\]
Our goal is to define a critical bootstrap percolation update family whose difficulty $\alpha$ is (up to a simple transformation) the solution to \textsc{Set Cover}. Let the set of rules associated to $\cS$ be \[\cU_\cS=\{U_0,U_1\}\cup \{U_{i,j}^k\;:\;1\le i\le |\cS|,\,1\le k\le |\cS|^2,\,i,k\in\Z,\,j\in S_i\},\]
where
\begin{align*}
U_0=&\left\{(-k,0),(0,-k)\;:\;1\le k\le N|\cS|^2\right\},\\
U_1=&\left\{(+k,0),(0,-k)\;:\;1\le k\le N|\cS|^2\right\}
\end{align*}
and the rules $U_{i,j}^k$, defined as follows, share a large portion of their structure (see Figure~\ref{fig:rule}).
\begin{align*}
T=&\left\{(0,-y)\;:\;1\le y\le N\cdot|\cS|^2\right\},\\
W=&\{(x,0)\;:\;1\le x\le |\cS|^2\}\cup \{(l\cdot |\cS|,1)\;:\;1\le l\le |\cS|\}, \\
U_{i,j}^k=&T\cup\left(\left(W\cup\{(i\cdot|\cS|,2)\}\right)-(k+(N+j)\cdot |\cS|^2,0)\right).
\end{align*}

\begin{figure}
	\begin{center}
	\begin{tikzpicture}[scale=0.4]
		\def\S {4}
		\fill[pattern=north west lines] (1,0) rectangle ++(16,1);
		\foreach \x in {1,...,4}
			{\fill[pattern=north west lines] (\x*4,1) rectangle ++(1,1);}
		\fill[pattern=north west lines] (3*4,2) rectangle ++(1,1);
		\fill[fill=gray!30] (28,0) rectangle ++(1,1);

		\draw[decorate,decoration={brace,amplitude=6pt}] (1,3)--(17,3) node [black,midway,yshift=12.0] {\tiny $W\cup \{(i\cdot|\cS|,2)\}$};
		\draw[decorate,decoration={brace,amplitude=6pt}] (26,3)--(30,3) node [black,midway,yshift=12.0] {\tiny region of $j\in{[1,N]}$};

		\foreach \x/\l in {2/2,3/\dots,4/$|\cS|$,6/\dots,8/$2|\cS|$,10/\dots,12/$i|\cS|$,14/\dots,16/$|\cS|^2$,20/\dots,28/$k+(N+j)|\cS|^2$}
			\node at (\x+0.5,-0.5) {\tiny \l};

		\node[anchor=east] at (2,-0.5) {\tiny $x=1$};
		\node[anchor=east] at (0,0.4) {\tiny $y=0$};
		\node[anchor=east] at (0,1.4) {\tiny $1$};
		\node[anchor=east] at (0,2.4) {\tiny $2$};

		\draw[step=1] (0,0) grid (18,3);
		\draw[step=1] (23,0) grid ++(8,3);
	\end{tikzpicture}
	\caption{A visualisation of $(U_{i,j}^k\setminus T)+(k+(N+j)|\cS|^2,0)$; the shaded cell indicates where the origin is shifted to.}\label{fig:rule}
	\end{center}
\end{figure}
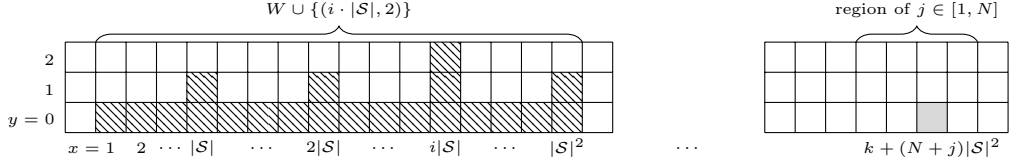

First we claim that the only isolated stable direction is $u=\pi/2$ and $[-\pi,0]$ contains the rest of the stable directions. The unstable intervals corresponding to the rules $U_0$ and $U_1$ are $(0,\pi/2)$ and $(\pi/2,\pi)$, respectively. The unstable interval of $U_{i,j}^k$ is contained in $(0,\pi/2)$ for all $i,j,k$. Thus, $\cU_\cS$ is indeed critical and $\alpha(\cU_\cS)=\alpha(u)$, so that we can focus on this direction.

Let $M\subseteq \{1,\ldots,|\cS|\}$ be an optimal solution to the \textsc{Set Cover} problem given by $\cS$ i.e.\ a set of minimal size such that
\[\bigcup_{i\in M}S_i=\{1,\ldots,N\}.\]
We first claim that, setting
\[ Z_0=W\cup \{(i\cdot |\cS|,2)\;:\;i\in M\} \]
we have $[Z_0\cup\H_u]\supset l_u$, so that
\begin{equation}
\label{eq:alpha:upper}
\alpha(u)\le |Z_0|=|W|+|M|=|\cS|^2+|\cS|+|M|.
\end{equation}
Indeed, using once each of the rules $U_{i,j}^k$ for all $i\in M$, $j\in S_i$ and $1\le k\le |\cS|^2$, one infects all sites in
\[\left[1+(N+1)\cdot|\cS|^2,(2N+1)\cdot|\cS|^2\right]\times\{0\},\]
since $M$ is a cover, and those are enough to infect $l_u$ using $U_0$ and $U_1$.

For any $Z\subseteq\Z^2$ recall the notation $\bar Z=[Z\cup\H_u]\setminus\H_u$ from Definition~\ref{def:diff}. To prove that~\eqref{eq:alpha:upper} is actually an equality, we suppose that there exists a set $Z\subset \Z^2\setminus\H_u$ for which $|\bar Z|=\infty$ and $|Z|<|Z_0|$. Fix a minimal such set $Z$.

First note that $|U_0\setminus\H_u|=N|\cS|^2$ and similarly for $U_1$. Therefore, if there exists $p\in \Z^2\setminus\H_u$ such that one of $p+U_0$ and $p+U_1$ is a subset of $Z\cup\H_u$, then $|Z|\ge N|\cS|^2>|Z_0|$ -- a contradiction. However, in order not to have $\bar Z=\varnothing$ some of the rules must be applicable to $Z\cup\H_u$ and therefore there exists $p\in \Z^2\setminus\H_u$ such that $p+W\subseteq Z$.

\begin{obs}\label{obs:W}
	For any $q\in \Z^2\setminus \{0\}$ we have $|(q+W)\setminus W|>|\cS|$.
\end{obs}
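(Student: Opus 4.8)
The plan is an elementary case analysis on the vertical component $b$ of $q=(a,b)$, exploiting that $W$ is the disjoint union of a ``long row'' $W_0=\{(x,0):1\le x\le|\cS|^2\}$ and a ``sparse row'' $W_1=\{(l\cdot|\cS|,1):1\le l\le|\cS|\}$, lying in the horizontal lines $y=0$ and $y=1$ respectively. Since $q+W_0$ lies in the line $y=b$ and $q+W_1$ in the line $y=b+1$, and $W\subseteq\{y=0\}\cup\{y=1\}$, we record: $q+W_0$ is disjoint from $W$ unless $b\in\{0,1\}$; $q+W_1$ is disjoint from $W$ unless $b\in\{-1,0\}$; and when $b=0$, moreover, $q+W_0$ can meet $W$ only inside $W_0$ and $q+W_1$ only inside $W_1$ (by the same row-disjointness).

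The two easy cases come first. If $b\notin\{0,1\}$, then $q+W_0$ is disjoint from $W$, whence $|(q+W)\setminus W|\ge|W_0|=|\cS|^2>|\cS|$, using $|\cS|\ge4$. If $b=1$, then $q+W_1$ (of size $|\cS|$) is disjoint from $W$, while $q+W_0$ can meet $W$ only inside $W_1$, which has $|\cS|$ elements, so at least $|\cS|^2-|\cS|$ of its sites lie outside $W$; the two contributions add up to at least $|\cS|^2>|\cS|$.

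The remaining, and main, case is $b=0$ with $a\ne0$. By the row-disjointness noted above,
\[|(q+W)\setminus W|=|(q+W_0)\setminus W_0|+|(q+W_1)\setminus W_1|,\]
and the first summand equals $\min(|a|,|\cS|^2)$, since $W_0$ and $q+W_0$ are integer intervals of length $|\cS|^2$ offset by $a\ne0$. For the second summand: if $|\cS|\nmid a$, then every element of $q+W_1$ has first coordinate $\not\equiv 0\pmod{|\cS|}$, so $q+W_1$ is disjoint from $W_1$ and contributes $|\cS|$; since $|a|\ge1$, the total is at least $1+|\cS|>|\cS|$. If instead $a=m\cdot|\cS|$ with $m\ne0$, then $(q+W_1)\setminus W_1$ has size $\min(|m|,|\cS|)$ and $(q+W_0)\setminus W_0$ has size $\min(|m|\cdot|\cS|,|\cS|^2)$; splitting on whether $|m|\ge|\cS|$ (total $|\cS|^2+|\cS|$) or $1\le|m|<|\cS|$ (total $|m|(|\cS|+1)\ge|\cS|+1$) shows the sum exceeds $|\cS|$ in all cases.

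There is no genuine obstacle here; the proof is a bookkeeping exercise. The only point that needs a little care is the sub-case $b=0$, $|\cS|\mid a$, where neither row of $W$ is completely displaced off itself, so one cannot point to a single large ``orphan'' block and must instead add the two partial contributions and verify the resulting inequality by hand.
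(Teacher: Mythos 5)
Your proof is correct, and it fills in exactly the sort of elementary row-by-row case check that the paper calls an ``immediate verification'' and does not spell out. There is no divergence of approach to compare; the only thing worth noting is that you actually only need $|\cS|\ge2$ (not the stated $|\cS|\ge4$) for the inequalities $|\cS|^2>|\cS|$ and $|\cS|+1>|\cS|$ that close each case.
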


Although the verification is immediate, calling this fact an observation is deceptive, since $W$ is designed to possess this property. It follows that $p$ is unique, otherwise $|Z|> |W|+|\cS|\ge|Z_0|$ (since any minimal cover is smaller than the universe), a contradiction.

\begin{lem}\label{prop:ycoord}
	Every point $q\in \bar Z\setminus Z$ has the same $y$-coordinate as $p$.
\end{lem}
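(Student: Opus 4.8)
The plan is to follow the bootstrap dynamics from $Z\cup\H_u$ step by step and prove, by induction on time, that no site infected outside $\H_u$ ever lies at a height (second coordinate) other than that of $p$. Write $h_0$ for the second coordinate of $p$ — well defined since $p$ is unique — and recall $h_0\ge 0$ (as $p\notin\H_u$) and $p+W\subseteq Z$. Set $A_0=Z\cup\H_u\subseteq A_1\subseteq\cdots$, so that $\bar Z\setminus Z=\bigcup_{t\ge 1}(A_t\setminus A_0)$, and every site of $A_t\setminus A_0$ has nonnegative height since it avoids $\H_u$. Suppose for contradiction that $\bar Z\setminus Z$ contains a site of height $\ne h_0$; let $t\ge 1$ be minimal so that $A_t\setminus A_0$ contains such a site $q$, of height $h\ne h_0$. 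By minimality of $t$, every site of $A_{t-1}\setminus A_0$ has height exactly $h_0$.

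I would then identify the rule $U$ with $q+U\subseteq A_{t-1}$. It cannot be $U_0$ or $U_1$: the $N|\cS|^2$ sites of $q+(U_0\setminus\H_u)$ (or $q+(U_1\setminus\H_u)$) all have height $h\ge 0$, so they avoid $\H_u$, and since $h\ne h_0$ they avoid $A_{t-1}\setminus A_0$, hence all lie in $Z$, forcing $|Z|\ge N|\cS|^2\ge 4|\cS|^2>|Z_0|$. So $U=U_{i,j}^k$ for some $i,j,k$; with $c=(k+(N+j)|\cS|^2,0)$ it contains the shifted long row of $W$, i.e.\ the $|\cS|^2$ sites $R:=q+\bigl(\{(x,0):1\le x\le|\cS|^2\}-c\bigr)$, all at height $h$, and as before $R\subseteq Z$.

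To finish I would count against the budget $|Z|<|Z_0|=|\cS|^2+|\cS|+|M|\le|\cS|^2+2|\cS|$. Since $p+W\subseteq Z$ occupies $|\cS|^2$ sites at height $h_0$ and $|\cS|$ sites at height $h_0+1$, while $R$ lies at height $h\ne h_0$, the set $R$ can meet $p+W$ only when $h=h_0+1$ and then only inside the $|\cS|$ sites $p+\{(l|\cS|,1):1\le l\le|\cS|\}$; in every case $|Z|\ge|R\cup(p+W)|\ge 2|\cS|^2>|Z_0|$ using $|\cS|\ge 4$, a contradiction, so no such $q$ exists. The one point requiring care is the inductive bookkeeping: one must make sure that at the first offending step all previously infected sites outside $\H_u$ share the single height $h_0$, so that the long row $R$ forced by $U_{i,j}^k$ genuinely costs a fresh block of $|\cS|^2$ sites of $Z$ — essentially disjoint from the block $p+W$ — which the tight budget cannot afford. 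This is exactly where the incompressibility of $W$ (Observation~\ref{obs:W}) and the resulting uniqueness of $p$ get used.
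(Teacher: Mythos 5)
Your proof is correct and follows the same strategy as the paper's: take the first-infected site $q$ off the row of $p$, observe that any rule inferring $q$ forces at least $|W|-|\cS|=|\cS|^2$ sites of $Z$ on $q$'s row (your case split into $U_0,U_1$ versus $U_{i,j}^k$ just makes this explicit), and then use that this block is essentially disjoint from the $|\cS|^2$ sites of $p+W$ on $p$'s row to exceed the budget $|Z_0|$. The paper compresses this into one line, but the mechanism — minimal infection time plus the row-count against the budget — is identical.
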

\begin{proof}
Suppose that there exists $q\in \bar Z\setminus Z$ contradicting the statement of the lemma and consider such a $q$ with minimal infection time for the process with initial set of infections $Z\cup \H_u$. Then $Z$ contains at least $|W|-|\cS|$ sites on the row of $q$, as all rules contain at least as many and by minimality of $q$. Therefore, $|Z|\ge 2(|W|-|\cS|)>|Z_0|$, a contradiction.
\end{proof}
By Lemma~\ref{prop:ycoord} and the fact that $(Z\cup\H_u)-(0,1)\subseteq (Z-(0,1))\cup\H_u$ and $(Z\cup\H_u)+(1,0)=(Z+(1,0))\cup\H_u$, we can assume without loss of generality that $p=0$.

By the minimality of $Z$ and Lemma~\ref{prop:ycoord}, the $y$-coordinate of any site in $Z$ is 0, 1, or 2. Indeed, in order to infect each of the sites $q\in\bar Z\subseteq l_u$, we use one of the rules, but those are all contained in $\{x\in\Z^2,\<x,u\>\le 2\}$, so one can remove any other sites from $Z$ without changing $\bar Z$.

\begin{lem}\label{lem:q}
There does not exist $q\in \Z^2\setminus\{0\}$ such that $q+W\subseteq\bar Z$.
\end{lem}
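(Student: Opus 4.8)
The plan is to suppose, for contradiction, that $q+W\subseteq\bar Z$ for some $q\in\Z^2\setminus\{0\}$, and to contradict the minimality of $Z$. Throughout we use the standing facts: $W\subseteq Z$ is the \emph{only} translate of $W$ inside $Z$ and sits at the origin; $Z\subseteq\Z\times\{0,1,2\}$; $|Z|<|Z_0|=|\cS|^2+|\cS|+|M|$; and by Lemma~\ref{prop:ycoord} every site of $\bar Z\setminus Z$ — equivalently, every site infected during the evolution of $Z\cup\H_u$ — lies on $l_u$. Write $e_0,e_1\geq0$ for the numbers of sites of $Z$ on rows $0$ and $1$ beyond the $|\cS|^2$ and $|\cS|$ sites supplied by $W$, and $e_2\geq0$ for the number of sites of $Z$ on row~$2$; then $e_0+e_1+e_2\leq|M|-1\leq|\cS|-1$, so it suffices to force $e_0\geq|\cS|$ or $e_1\geq|\cS|$.

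First I would reduce to $q\in l_u$ and then to a single hard position. Since $q+W\subseteq\bar Z\subseteq\Z\times\{0,1,2\}$ occupies exactly the rows $q_y$ and $q_y+1$, we have $q_y\in\{0,1\}$; if $q_y=1$ then $q+W$ lies above $l_u$, so $q+W\subseteq Z$ by Lemma~\ref{prop:ycoord}, and with $W\subseteq Z$ and Observation~\ref{obs:W} this gives $|Z|\geq|W|+|(q+W)\setminus W|>|W|+|\cS|\geq|Z_0|$, impossible. So $q=(q_x,0)$ with $q_x\neq0$. The row-$1$ parts of $q+W$ and of $W$, each an $|\cS|$-element set, both lie on row~$1$ and hence in $Z$; if $q_x$ is not a multiple of $|\cS|$, or equals $m|\cS|$ with $|m|\geq|\cS|$, these sets lie in distinct residue classes modulo $|\cS|$ (or are disjoint for size reasons) and $e_1\geq|\cS|$, done. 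So we are left with $q_x=m|\cS|$, $0<|m|<|\cS|$, in which case $Z$ contains a run of $|\cS|+|m|>|\cS|$ equally spaced row-$1$ sites, hence at least one \emph{extra} row-$1$ site beyond $W$'s whose index $l^{*}$ lies outside $\{1,\dots,|\cS|\}$ but within the interval $[\min(1,m+1),\max(|\cS|,m+|\cS|)]$.

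For this last case I would use minimality. Every site of $Z$ is used by some rule application, and an application of $U_{i,j}^k$ at a site $s\in l_u$ consists of $s+T\subseteq\H_u$, a translate $(c,0)+W$ with $c=s_x-k-(N+j)|\cS|^2$, and a row-$2$ site $(c+i|\cS|,2)\in Z$; its row-$1$ inputs are $\{(c+l|\cS|,1):1\leq l\leq|\cS|\}\subseteq Z$. In particular, since $q_x=m|\cS|$, the extra row-$1$ site $(l^{*}|\cS|,1)$ is used by some application $\mathcal A$, and the length-$|\cS|$ interval of indices used by $\mathcal A$ contains $l^{*}\notin\{1,\dots,|\cS|\}$, so $\mathcal A$ has a nonzero offset $c'|\cS|$ with $1\le|c'|<2|\cS|$. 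Moreover one checks, by running the dynamics with $(l^{*}|\cS|,1)$ removed from $Z$, that $\mathcal A$ can be chosen to fire before the first appearance of an $N|\cS|^2$-block of consecutive infected sites on $l_u$ — which is the only mechanism producing $|\bar Z|=\infty$, since without it only the $U_{i,j}^k$ can fire and their offsets range over a finite set (controlled by the $\le|\cS|-1$ row-$2$ sites of $Z$). Now $\mathcal A$ needs its row-$0$ block — the translate by $c'|\cS|$ of $W$'s row-$0$ block $\{(x,0):1\leq x\leq|\cS|^2\}$ — to be infected beforehand, and at least $|\cS|$ of its sites lie outside $W$'s block, all within $\{(x,0):-2|\cS|^2\le x\le3|\cS|^2\}$. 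Before the first $N|\cS|^2$-block, $U_0$ and $U_1$ cannot fire, so each of those sites is in $Z$ — giving $e_0\geq|\cS|$ — or was infected by an earlier $U_{i,j}^k$-application, which infects a site at $x\geq c''+(N+1)|\cS|^2+1$ for its offset $c''$; for this to fall in $[-2|\cS|^2,3|\cS|^2]$ one needs $c''<(2-N)|\cS|^2$, whence (using $N\ge4$) that application's $|\cS|$ row-$1$ inputs $\{c''+l|\cS|:1\le l\le|\cS|\}$ sit at strictly negative $x$-coordinates, disjoint from all of $W$'s row-$1$ sites, giving $e_1\geq|\cS|$. Either way we contradict $e_0+e_1+e_2\le|\cS|-1$, which proves the lemma.

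The heart of the argument — and its main difficulty — is this last step: showing that the part of the shifted row-$0$ block $\mathcal A$ needs can be supplied neither from $Z$ (by the budget $e_0+e_1+e_2\le|M|-1$) nor from the dynamics. Making the dynamical half precise requires locating the finite set of offsets at which any $U_{i,j}^k$ can ever fire in terms of the few extra sites of $Z$, verifying that an application involving such a site is essential for $|\bar Z|=\infty$ only if it precedes the first macroscopic infected interval, and checking that the $U_{i,j}^k$ firing before that interval never deposit sites adjacent to $W$'s row-$0$ block unless $Z$ already carries $|\cS|$ row-$1$ sites far to one side.
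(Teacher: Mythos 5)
Your overall strategy (reduce to $q=(m|\cS|,0)$ with $0<|m|<|\cS|$ and then force $e_0\ge|\cS|$ or $e_1\ge|\cS|$) is a reasonable plan, and the reductions in your first two paragraphs are essentially sound. But the ``heart of the argument'' you flag at the end is indeed a genuine gap, not just a detail to be ``checked'': as written you have no clean control over where the rule $\mathcal A$ that uses $(l^*|\cS|,1)$ fires, nor over which $U_{i,j}^k$ applications precede it, because you never pinned down \emph{which} translate of $W$ the application $\mathcal A$ is built on. You try to recover this by ad hoc constraints on the offsets $c',c''$, and by an auxiliary claim that ``without an $N|\cS|^2$-block the $U_{i,j}^k$ offsets range over a finite set'' -- a statement that itself needs a separate argument (bounding the translates of $W$ that can ever become fully infected from the row-1/row-2 budget) and which, in the paper, only appears as a \emph{consequence} of Lemma~\ref{lem:q}, making your use of it perilously close to circular.

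The single idea that collapses all of this is the paper's choice of $q$: take $q$ to be the \emph{first} translate of $W$ other than $W$ itself to become fully infected during the evolution of $Z\cup\H_u$. Before that time, the only fully infected translate of $W$ is $W$ at offset $0$, so every $U_{i,j}^k$ application can only produce sites at $x\ge 1+(N+1)|\cS|^2$, which is far to the right of $q+W$ once $|q_x|<|\cS|^2$. Hence the sites of $l_u\cap(q+W)\setminus W$ that are not already in $Z$ must be infected by $U_0$ or $U_1$; and the moment $U_0$ or $U_1$ fires, all of $l_u$ can be infected using only those two rules. Consequently, deleting \emph{every} extra row-$1$ site of $Z$ (which is a nonempty set because $(q+W)\setminus(l_u\cup W)\ne\varnothing$) does not change the evolution up to that first $U_0/U_1$ firing -- the $W$-offset $U_{i,j}^k$ applications only ever read $W$'s own row-$1$ sites -- and therefore still yields an infinite $\bar Z$. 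This contradicts the minimality of $Z$ directly. This replaces your entire third-paragraph analysis of $\mathcal A$'s row-$0$ supply and the earlier-application offsets with one short, fully rigorous argument. I would encourage you to rewrite your last paragraph using this minimal-$q$ device rather than attempting to patch the offset bookkeeping.
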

\begin{proof}
	Let $q$ be as in the statement of the lemma such that no other $q'+W$ becomes fully infected before $q+W$ for the process with initial infections $Z\cup\H_u$. By Lemma~\ref{prop:ycoord} we have that $q\in l_u$.

	If $|x|\ge |\cS|^2$, then by Lemma~\ref{prop:ycoord} the set $Z\setminus W$ contains at least $|W\setminus l_u|=|\cS|$ elements (with $y$-coordinate $1$), therefore $|Z|\ge |W|+|\cS|\ge |Z_0|$, a contradiction.

	Assume that $|x|<|\cS|^2$. If $l_u\cap(q+W)\setminus W\subseteq Z$, then by Observation~\ref{obs:W} we have $|Z|\ge |W|+|\cS|$ -- a contradiction. Therefore, some of the sites in $l_u\cap(q+W)\setminus W\subseteq \bar Z$ are infected by the process. However, by minimality of $q$ they can only be infected using $U_0$ or $U_1$. Yet, as soon as one can use rule $U_0$ or $U_1$ to infect a site in $l_u$, the entire $l_u$ can be infected using those rules only. Thus, removing from $Z$ every site in $Z\setminus W$ with $y$-coordinate 1 (and in particular $(q+W)\setminus(l_u\cup W)\neq\varnothing$) does not prevent the infection of infinitely many sites, which contradicts the minimality of $Z$.
\end{proof}

By Lemma~\ref{lem:q} we have that until a rule $U_0$ or $U_1$ is used the only possible infections are of the form ``$k+(N+j)|\cS|^2$ becomes infected  via rule $U_{i,j}^k$''. Therefore, all sites $(x,2)\in Z$ are either redundant (which contradicts the minimality of $Z$) or satisfy $x=i\cdot|\cS|$ with $1\le i\le |\cS|$.

	Finally, set $I=\{i\;:\;(i\cdot|\cS|,2)\in Z\}$ and
\[J=\{1,\ldots,N\}\setminus\bigcup_{i\in I}S_i.\]
Then, in order to have $|\bar Z|=\infty$, it is necessary (and sufficient) to have a sequence of $N|\cS|^2$ consecutive sites in
\[(Z\cap l_u)\cup\{(k+(N+j)|\cS|^2,0)\;:\;i\in I,1\le k\le|\cS|^2,j\in S_i\}.\]
However, such a sequence is either disjoint from the infections of the form $(k+(N+j)|\cS|^2,0)$, in which case $|Z|\ge N|\cS|^2>|Z_0|$ -- a contradiction, or disjoint from $W$. In the latter case the sequence contains at most
\[|Z|-|W|-|I|+(N-|J|)\cdot|\cS|^2< (|Z_0|-|W|)+(N-|J|)|\cS|^2\] infected sites. If $|J|\neq N$, i.e. $I$ is not a cover, the number of sites is at most $|\cS|+(N-1)|\cS|^2<N|\cS|^2$ -- a contradiction. Otherwise, $I$ is a cover and $|Z|\ge |W|+|I|\ge |Z_0|$, as $M$ is a minimal cover. This contradiction completes the proof that $\alpha(u)$ is indeed equal to $|W|+|M|=|\cS|^2+|\cS|+|M|$ as claimed.

The set $\cU_\cS$ (to which we reduced the \textsc{Set Cover} problem $\cS$) contains $|\cS|^3\sum_{S_i\in\cS} |S_i|$ rules, each of which has cardinality at most $\cO(N|\cS|^2)$, thus the reduction is indeed polynomial. This concludes the proof of Theorem~\ref{th:NP}, because $\alpha(\cU_\cS)-|\cS|^2-|\cS|$ is the size of an optimal set cover from $\cS$.

\section{Open problems}\label{sec:open}
Let us conclude with a few open questions naturally suggested by the present work. Of course, many more complexity issues arise systematically for hard problems, but let us mention the foremost ones.
\begin{ques}
	Can one find a good approximation of $\alpha$ in time polynomial of the input size $\|\cU\|$ (defined in~\eqref{eq:def:inputsize})?
\end{ques}
\begin{ques}
Are there interesting subfamilies of critical models for which the difficulty is computable in polynomial time $\|\cU\|$?
\end{ques}
\begin{ques}
	In view of Remark~\ref{rem:complex}, can one find an algorithm which computes $\alpha$ in $e^{\cO(\|\cU\|)}$ time?
\end{ques}
In the appendix we provide an example showing the $\alpha$ itself can be exponentially large in $\|\cU\|$, suggesting that one should not hope for a subexponential complexity algorithm to compute it.
\begin{ques}
	Is the \textsc{2D Critical Bootstrap Difficulty} problem in \textsc{NP} (and thus \textsc{NP}-complete)?
\end{ques}

\section*{Acknowledgments}
The authors would like to thank the organizers of ICGT 2018, Lyon, during which this project started. We also thank Rob Morris for helpful comments regarding~\cite{Balister18b}.

\appendix
\section{Relevance of the diameter}
In this appendix we provide a sequence ${(\cU_k)}_{k=2}^\infty$ of update
families such that $\sum_{U\in \cU_k}|U|$ is constant and $\alpha(\cU_k)$ is exponential in $\|\cU_k\|$. This answers a question raised during the preparation of this paper. The example gives some relevance to the questions in Section~\ref{sec:open} as well as further justifying the definition of $\|\cU\|$ in equation~\eqref{eq:def:inputsize}.  
For any integer $k\ge 2$ let $\cU_k=\{U_1,U_2\}$ with
\begin{align*}
	U_1={}&\{(0,-1),(k,0),(k-1,0)\}\\
	U_2={}&\{(0,-1),(-k,0),(-k+1,0)\}.
\end{align*}

\begin{prop}\label{prop:alphaUk}
For any integer $k\ge 2$ the update family $\cU_k$ is critical and \[\alpha(\cU_k)=k=\frac{D}{2}=\frac{1}{2}\cdot e^{\|\cU_k\|/6}.\]
\end{prop}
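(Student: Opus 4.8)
The plan is to verify each claim in the statement separately, starting with the easy algebraic identities and ending with the genuine content, namely the computation of $\alpha(\cU_k)$. First I would record that $D = 2k$ by~\eqref{eq:def:D}, since the site of largest $\|\cdot\|_\infty$-norm appearing in $U_1 \cup U_2$ is $(\pm k, 0)$. Next, $\|\cU_k\| = \log D \cdot \sum_{U \in \cU_k}|U| = 6\log(2k)$ by~\eqref{eq:def:inputsize}, since $|U_1| = |U_2| = 3$; hence $e^{\|\cU_k\|/6} = 2k = D$, which gives the last two equalities once we know $\alpha(\cU_k) = k$. So the heart of the matter is the single claim $\alpha(\cU_k) = k$, together with checking that $\cU_k$ is critical.

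For criticality, I would compute the stable directions exactly as in the preliminary step of the proof of Theorem~\ref{th:decidable}. The rule $U_1 = \{(0,-1),(k,0),(k-1,0)\}$ has all its sites in the closed lower-right region, so it induces an open interval of unstable directions straddling the direction pointing into the fourth quadrant; concretely its unstable interval is a small interval around the direction perpendicular to the ``average'' of $(1,0)$ and $(0,-1)$, of angular width roughly $\pi - \cO(1/k)$. Symmetrically $U_2$ gives the reflected interval. Together these two intervals cover $S^1$ except for a closed arc around the direction $u = \pi/2$ (the vertical upward direction) and its antipode $-\pi/2$. Since $(0,-1)$ lies strictly in $\H_u$ for $u = \pi/2$ but each rule also contains sites on or above $l_u$, one checks $u = \pi/2$ is stable; and since the gaps left by the two unstable intervals are nondegenerate closed arcs, there is a whole arc of stable directions near $-\pi/2$ (so $\cU_k$ is not supercritical — no open semicircle is entirely unstable — but it is not subcritical either, since the semicircle $(0,\pi)$ contains only finitely many stable directions, in fact only $\pi/2$). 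Thus $\cU_k$ is critical and $\alpha(\cU_k) = \alpha(\pi/2)$.

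The computation $\alpha(\pi/2) = k$ then has an upper and a lower bound. For the upper bound, I would exhibit an explicit set $Z$ of size $k$ on the line $l_u$ (the $x$-axis, with $\H_u$ being the open lower half-plane) whose closure with $\H_u$ is infinite: taking $Z = \{(0,0),(1,0),\dots,(k-1,0)\}$, the rule $U_1$ fires at the site $(x, 0)$ whenever $(x-k, 0)$ and $(x-k+1, 0)$ and $(x, -1)$ are all infected — but $(x,-1) \in \H_u$ always, so $k$ consecutive infected sites on $l_u$ beget the next one to the right via $U_1$, and inductively an infinite ray is infected; symmetrically $U_2$ would extend to the left, but one ray already suffices. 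Hence $\alpha(\pi/2) \le k$. For the matching lower bound I would argue that any $Z \subseteq \Z^2 \setminus \H_u$ with $\bar Z$ infinite must have $|Z| \ge k$: by stability $\bar Z \subset l_u$ (arguing as in Lemma~\ref{lem:1d:bound}), so only the sites of $Z$ on $l_u$ and on the line $y = 1$ can ever participate (every rule is contained in $\{y \le 1\} \cup \H_u$, and in fact the only rule-sites with $y = 1$... there are none — both rules have their non-$\H_u$ sites entirely on $l_u$), so we may assume $Z \subset l_u$ and the dynamics is purely one-dimensional with the two effective rules $\{(k-1,0),(k,0)\} \mapsto 0$ (from $U_1$, reading it as: sites at relative positions $-k, -k+1$ infect $0$) and its reflection from $U_2$. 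A finite set of infected sites on $\Z$ closes to something infinite only if at some point it contains two sites at distance exactly... precisely $k$ consecutive sites are needed to trigger either rule, since each rule requires two infected sites at positions differing by $1$ that are $k$ away from the target. More carefully: an infected interval grows to the right iff its right end contains two consecutive infected sites that, together, sit $k-1$ and $k$ to the left of the next site — i.e. one needs an infected block reaching far enough; I would show by a straightforward induction on the gap structure that no set of size $< k$ can ever infect a new site at all (the first infection requires two already-infected consecutive sites $k-1$ and $k$ units from it, and a set that eventually produces an infinite closure must, at the first infection step, already contain such a consecutive pair plus enough to continue, forcing $|Z| \ge k$). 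The main obstacle is making this last lower-bound argument fully rigorous: one must rule out the possibility that a small but cleverly placed $Z$ — say with sites on the line $y=1$ that I claimed are irrelevant, or sites far apart that bootstrap each other in a non-obvious way — could trigger an infinite cascade; the reduction to $Z \subset l_u$ and then the combinatorial claim that $k$ consecutive infected sites are both necessary and sufficient for any forward infection is where the real (if short) work lies, and it is exactly here that the specific ``two adjacent sites at distance $k$'' design of $U_1, U_2$ is used.
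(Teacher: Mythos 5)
Your overall plan matches the paper's: determine the stable directions, exhibit an explicit $k$-element subset of $l_u$ for the upper bound, and reduce the lower bound to a one-dimensional combinatorial claim. The algebraic identities ($D=2k$, $\|\cU_k\|=6\log(2k)$, hence $\tfrac12 e^{\|\cU_k\|/6}=k$) and the criticality conclusion are fine, though a detail is off: each of $U_1,U_2$ has unstable interval of width exactly $\pi/2$ (namely $(\pi/2,\pi)$ and $(0,\pi/2)$ respectively), not $\pi-\cO(1/k)$; the stable set is $[-\pi,0]\cup\{\pi/2\}$. The upper-bound set works, modulo an inessential swap of the roles of $U_1$ and $U_2$ (the rule $U_1=\{(0,-1),(k,0),(k-1,0)\}$ extends infections to the \emph{left}).

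The genuine gap is in the lower bound, and you flag it yourself. The sentence ``no set of size $<k$ can ever infect a new site at all'' is false as stated: already for $k\ge 2$ the two sites $(k-1,0),(k,0)$ infect $(0,0)$ via $U_1$. The corrected parenthetical --- that a size-$<k$ set on $\Z$ cannot produce an infinite closure under $\{k,k-1\}$ and $\{-k,-k+1\}$ --- is precisely the paper's 1D Lemma and is the real content of the proposition; proving it requires a nontrivial idea (tracking which rule infected each site, deducing that a site infected far to the left of $Z$ via $U_1$ forces $k$ consecutive $U_1$-infected sites, then a counting argument with the decreasing sets $Y_i$). Your reduction to $Z\subset l_u$ is also not justified as written: the parenthetical about ``rule-sites with $y=1$'' is confused. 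The relevant fact is that every site of $U_1\cup U_2$ has $y$-coordinate $\le 0$, so the $l_u$-row dynamics is self-contained, giving $\bar Z\cap l_u=\overline{Z\cap l_u}$, and one still has to show that $\bar Z$ infinite forces $\bar Z\cap l_u$ infinite. (This is actually a slightly shorter route than the paper's projection/collapsing argument, so your instinct is right, but the step needs to be carried out.) As it stands the lower bound is a plan rather than a proof.
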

\begin{proof}
It is not hard to check as in the examples in Figure~\ref{fig:example} that (similarly to the Duarte model) the set of stable directions for $\cU_k$ is $[-\pi,0]\cup\{\pi/2\}$, so the model is critical. Moreover, $\alpha:=\alpha(\cU_k)=\alpha(u)$ where $u:=\pi/2$ is the only isolated stable direction.

It suffices to prove that $\alpha=k$. Consider $Z_0=\{(i,0)\ :\ i\in\{1,\dots,k\}\}$ and observe that $[Z_0\cup\H_u]=\H_u\cup l_u$. Indeed, by stability of $u$ we have $[Z_0\cup\H_u]\subseteq \H_u\cup l_u$, while using $U_1$ one can infect successively $(-i,0)$ for all $i\le 0$. Similarly, using $U_2$, one can infect $(k+i,0)$ for $i>0$.

We are thus left with proving that for any $Z\subset\Z^2$ with $|Z|<k$ we have $|\bar Z|<\infty$. Consider a minimal set $Z$ contradicting this statement.

Let $p(i,j)=(i,0)$ be the projection onto $l_u$ and let $p(Z)=\{p(z)\ :\ z\in Z\}$ be the projection of $Z$. We claim that
\begin{equation}\label{eq:barpZ}
	\overline{p(Z)}\supseteq p(\bar Z).
\end{equation}
Let $l_j=\{(i,j)\ :\ i\in\Z\}$ and let $m=\max\{j\ :\ \bar Z\cap l_m\neq\varnothing\}$. By stability of $u$ we have that $l_m\cap Z\neq\varnothing$. As $(0,-1)\in U_1\cap U_2$, we have that $p((\bar Z\setminus Z)\cap l_m)\subseteq p(\bar Z\cap l_{m-1})$. Moreover, since $U_1\cup U_2\subset\H_u\cup l_u$, we have $\bar Z\cap l_{m-1}=\overline{(Z\setminus l_m)}\cap l_{m-1}$. Therefore, if we consider $Z'=(Z\setminus l_m)\cup ((Z\cap l_m)-(0,1))$, i.e.\ we decrease the $y$-coordinates of all sites in $Z\cap l_m$ by 1, we have that
\begin{equation}\label{eq:pbarZ'}
	p(\bar Z')\supseteq p(\bar Z\cap l_m).
\end{equation}
Furthermore, as $U_1\cup U_2\subset \H_u\cup l_u$ and $Z'\cap (\H_u\cup\bigcup_{j<m}l_j)\supseteq Z\cap(\H_u\cup\bigcup_{j<m}l_j)$, we have \[\overline{Z'\cap (\H_u\cup\bigcup_{j<m}l_j)}\supseteq \overline{Z\cap (\H_u\cup\bigcup_{j<m}l_j)}.\]
Combining this with~\eqref{eq:pbarZ'}, we get that $p(\bar Z')\supseteq p(\bar Z)$. Repeating this procedure until $m=0$, we obtain~\eqref{eq:barpZ}.

By stability of $u$ we have that $\bar Z\subseteq \bigcup_{0\le j\le m} l_j$, so $\bar Z$ is infinite if and only if $p(\bar Z)$ is. Since $|p(Z)|\le|Z|$, we may replace $Z$ by $p(Z)$ and assume without loss of generality that $Z\subset l_u$. As $l_u$ identifies with $\Z$ by $(i,0)\mapsto i$, the following lemma concludes the proof.
\end{proof}

\begin{lem}
Consider the $1$-dimensional update family consisting of the rules $U_1=\{k,k-1\}$ and $U_2=\{-k,-k+1\}$. There does not exist $Z\subset \Z$ with $|Z|<k$ such that $|[Z]|=\infty$.
\end{lem}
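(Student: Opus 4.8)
The plan is to show that any $Z\subset\Z$ with $|[Z]|=\infty$ must have $|Z|\ge k$, by tracking the ``span'' of $Z$ and how the two rules can extend it. The rule $U_1=\{k,k-1\}$ lets one infect a site $x$ provided $x+k$ and $x+k-1$ are already infected, i.e.\ it extends an infected configuration to the \emph{left} by one site when two consecutive infected sites are present at distance $k,k-1$ to the right; symmetrically $U_2=\{-k,-k+1\}$ extends to the \emph{right}. The crucial point is that each rule requires a \emph{pair of consecutive} infected sites located a full distance $\approx k$ away from the new site, so progress to infinity is only possible if $Z$ already contains, somewhere, a block of two consecutive integers, and moreover the infection can only ``travel'' by repeatedly recreating such consecutive pairs.

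First I would observe that if $[Z]$ is infinite then it is infinite in at least one direction, say to the left (the other case being symmetric, or handled by reflection which swaps $U_1$ and $U_2$). Let $m=\min Z$; I claim that to infect $m-1$ we must use $U_1$, so $m+k-1,m+k\in[Z]$. More generally, consider the leftmost infected site at each time step and argue that each new leftward step of the frontier consumes, from the rule $U_1$, two consecutive sites that were infected earlier; since $U_2$ only ever acts to the right of existing infections, it can never help the left frontier advance. This reduces the problem to: starting from $Z$, using only $U_1$, can the infection reach $-\infty$? Equivalently, define the left frontier $f_t=\min A_t$ and note $f_{t+1}\in\{f_t,f_t-1\}$, with $f_{t+1}=f_t-1$ only if $f_t+k-1,f_t+k\in A_t$.

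The key step — and the main obstacle — is a counting/potential argument showing that with fewer than $k$ initially infected sites the left frontier gets stuck. Here is the intended mechanism: to push the frontier from position $f$ to $f-1$ we need two consecutive infected sites at $f+k-1$ and $f+k$; but to have previously pushed the frontier past $f+k-1$ and $f+k$ themselves (if those were not in $Z$) we needed, in turn, two consecutive infected sites roughly $k$ further to the right, and so on. Chasing this back, an infinite leftward cascade would need to have ``seeded'' infinitely many such consecutive pairs, each pair using up infected sites that are not otherwise close together; a careful bookkeeping shows the number of distinct initially infected sites needed is at least $k$. Concretely, I would set $Z=\{z_1<\dots<z_r\}$ with $r<k$, let $g$ be the largest gap index where the frontier must ``jump'' across uninfected territory, and show that between two consecutive elements of $Z$ that are more than $1$ apart the infection cannot propagate leftward at all, because creating a consecutive pair to the right of such a gap would itself require resources to the right that, by induction on $r$, are insufficient. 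The induction on $|Z|$ mirrors exactly the structure used in the Claim inside the proof of Lemma~\ref{lem:1d:bound}: if $Z$ has a gap larger than $2k$ it splits into two independent subproblems $Z'$ and $Z''$ with $|Z'|+|Z''|=|Z|<k$, and $[Z]=[Z']\sqcup[Z'']$, so one of them is already infinite with fewer than $k$ sites, contradicting minimality; and if $Z$ has no large gap, its total span is $O(k\cdot|Z|)=O(k^2)$, forcing $[Z]$ to be finite by the bounded-propagation argument. The base case $|Z|=1$ is immediate since neither rule is a singleton, so no infection ever occurs. I expect the delicate part to be making the ``each leftward step costs a consecutive pair, and consecutive pairs are expensive'' heuristic into a clean monovariant; once that is in place the rest is the same gap-splitting induction already used twice in the paper.
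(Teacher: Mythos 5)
Your high-level intuition — that each leftward step of the frontier requires two consecutive infected sites $k$ away, and that such pairs are ``expensive'' — is the right one, but the sketch does not turn it into a proof, and the auxiliary machinery you propose does not work as stated.

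The central counting step is missing. You say ``a careful bookkeeping shows the number of distinct initially infected sites needed is at least $k$'' and acknowledge this is the delicate part, but you never produce the monovariant. The paper's actual argument is exactly that missing bookkeeping: it first observes that any $z\in[Z]\setminus Z$ used in a $U_1$ application must itself have been infected by $U_1$ (since if $z$ were infected by $U_2$ then $z-k$ and $z-k+1$ were infected \emph{before} $z$, contradicting that one of them is infected \emph{after} $z$ in the $U_1$ step). Iterating this $k$ times from a site $z<\min Z-k^2$ yields a block $X_0$ of $k$ consecutive sites all infected via $U_1$. Writing $X_i=X_0+ki$ and letting $Y_i\subseteq\{1,\dots,k\}$ record which positions of $X_i$ are $U_1$-infected, one shows that each $y\in Y_{i-1}\setminus Y_i$ corresponds to a site $y+ki+z+k^2-k$ that must lie in $Z$ (it is used in a $U_1$ step, is not $U_1$-infected, hence is not $U_2$-infected either by the first observation). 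Since $Y_0=\{1,\dots,k\}$ and $Y_{i_0}=\varnothing$ for large $i_0$ (because far enough right everything is $U_2$-infected), this accounts for $k$ distinct elements of $Z$. That is the clean monovariant you were looking for, and without something equivalent the proof is not complete.

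Two of your auxiliary steps are also problematic. First, the claim that ``$U_2$ only ever acts to the right of existing infections, it can never help the left frontier advance'' is not true as stated: $U_2$ can infect sites to the right of $Z$ that are then consumed by $U_1$ to push the frontier left. The paper's first observation is precisely the careful version of what you want here, and it requires proof. Second, the gap-splitting induction with threshold $2k$ does not work: a gap of size $2k$ does not guarantee $[Z]=[Z']\sqcup[Z'']$, because the two pieces can each spread a long way toward each other before stalling (the generic $1$-dimensional bound from the Claim in the proof of Lemma~\ref{lem:1d:bound} is $D^2\cdot 2^D$, not $D$). And even granting a sufficiently large gap threshold, the final ``no large gap $\Rightarrow$ span $O(k^2)$ $\Rightarrow$ $[Z]$ finite'' step is a non sequitur: bounded span of $Z$ does not by itself force $[Z]$ to be finite (indeed a contiguous block of $k$ sites has span $k-1$ and infinite closure). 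The implication you need runs the other way, and for that you again need the counting argument. So the proposal has a genuine gap and the fallback route does not repair it.
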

\begin{proof}
Notice that if $z\in [Z]\setminus Z$ is used to infect another site using rule $U_1$, then either $z-k$ or $z-(k-1)$ gets infected after $z$, so $z$ is infected using rule $U_1$. Therefore, $z+k$ and $z+k-1$ are infected before $z$.

Let $Z$ be a counterexample of the statement of the lemma. Without loss of generality, we may assume that $\inf ([Z])=-\infty$. Necessarily, there exists $z\in[Z]$ with $z<\min Z-k^2$, which is infected using rule $U_1$. By the argument above, $z+k$ and $z+k-1$ are infected via rule $U_1$ (before $z$ gets infected). Iterating this argument we obtain that $X_0=\{z+k^2-k+1,\dots,z+k^2\}$ are all infected by rule $U_1$.

Let $X_i=X_0+k\cdot i$ and
\[Y_i=\{x-k\cdot i-z-k^2+k\ :\ x\in X_i,\ x\text{ is infected using $U_1$}\},\]
so that $Y_0=\{1,\dots,k\}\supseteq Y_i$ for all $i\ge0$. As in the proof of Proposition~\ref{prop:alphaUk}, one can check that $[X_0]=\Z$, so $[Z]=\Z$. Therefore, by an analogous reasoning for $U_2$, we have that all sites to the right of $Z$ are infected using rule $U_2$. Thus, $Y_{i_0}=\varnothing$ for $i_0$ sufficiently large. For any $y\in Y_{i-1}\setminus Y_i$ the site $y+k\cdot i+z+k^2-k$ is contained $\in Z$, because by definition, it does not get infected by $U_1$, and the first argument of this proof shows that it cannot be infected via $U_2$. Hence, $k=|Y_0\setminus Y_{i_0}|\le |Z|$, a contradiction.
\end{proof}
\bibliographystyle{plain}
\bibliography{Bib}

\end{document}